\documentclass{amsart}

\usepackage{amssymb,amscd,amsthm}
\usepackage{latexsym}
\usepackage{graphicx}
\usepackage{enumerate}
\usepackage{verbatim}

\date{\today}

\newcommand{\Z}{{\mathbb Z}}
\newcommand{\R}{{\mathbb R}}
\newcommand{\C}{{\mathbb C}}

\newcommand{\N}{{\mathbb N}}

\newtheorem{theorem}{Theorem}[section]

\newtheorem{lemma}[theorem]{Lemma}
\newtheorem{prop}[theorem]{Proposition}
\newtheorem{coro}[theorem]{Corollary}

\newtheorem{step}{Step}

\newcommand{\ess}{\text{\rm{ess}}}
\newcommand{\ac}{\text{\rm{ac}}}
\renewcommand{\sc}{\text{\rm{sc}}}
\newcommand{\pp}{\text{\rm{pp}}}

\newcommand{\tr}{\text{\rm{tr}}}
\newcommand{\supp}{\text{\rm{supp}}}
\newcommand{\disc}{\text{\rm{disc}}}

\newcommand{\ul}{\underline}
\newcommand{\ol}{\overline}
\def\Tr{\mathop{\rm Tr}\nolimits}
\newcommand{\SL}{\text{\rm{SL}}}


\def\B{\mathcal B}
\def\eps{\epsilon}

\def\ens{\ensuremath}                                     

\newcommand\uvm[2]{\scalebox{#1}{\ens{#2}}} 

\allowdisplaybreaks






\newcommand{\alp}{\ens{\alpha}}
\newcommand{\bet}{\ens{\beta}}

\newcommand\hsp[1]{\mbox{}\hspace{#1mm}} 
\newcommand\hspm[1]{\mbox{}\hspace{-#1mm}} 
\newcommand\vsp[1]{\par \vspace{#1mm}} 
\newcommand\vspm[1]{\par \vspace{-#1mm}} 

\newcommand{\h}{\hsp}

\sloppy

\numberwithin{equation}{section}


\begin{document}

\title{Ergodic Schr\"{o}dinger Operators in the Infinite Measure Setting}

\author[M.\ Boshernitzan]{Michael Boshernitzan}

\address{Department of Mathematics, Rice University, Houston, TX~77005, USA}

\email{michael@rice.edu}

\author[D.\ Damanik]{David Damanik}

\address{Department of Mathematics, Rice University, Houston, TX~77005, USA}

\email{damanik@rice.edu}

\thanks{D.D.\ was supported in part by NSF grants DMS--1361625 and DMS--1700131 and by an Alexander von Humboldt Foundation research award.}

\author[J.\ Fillman]{Jake Fillman}

\address{ Department of Mathematics, Texas State University,  San Marcos, TX 78666, USA}
	
\email{fillman@txstate.edu}
	
\thanks{J.F.\ was supported in part by an AMS-Simons travel grant, 2016-2018}

\author[M.\ Luki\'c]{Milivoje Luki\'c}

\address{Department of Mathematics, Rice University, Houston, TX~77005, USA}

\email{milivoje.lukic@rice.edu}

\thanks{M.L.\ was supported in part by NSF grant DMS--1700179.}

\begin{abstract}
We develop the basic theory of ergodic Schr\"odinger operators, which is well known for ergodic probability measures, in the case of a base dynamics on an infinite measure space. This includes the almost sure constancy of the spectrum and the spectral type, the definition and discussion of the density of states measure and the Lyapunov exponent, as well as a version of the Pastur--Ishii theorem. We also give some counterexamples that demonstrate that some results do not extend from the finite measure case to the infinite measure case. These examples are based on some constructions in infinite ergodic theory that may be of independent interest.
\end{abstract}

\maketitle
\setcounter{tocdepth}{1}
\tableofcontents

\section{Introduction}

The subject of this paper are ergodic one-dimensional discrete Schr\"odinger operators; these are self-adjoint operators $H_\omega$ on $\ell^2(\mathbb{Z})$ defined by
\begin{equation} \label{eq:schro.op}
(H_\omega u)_n = u_{n-1} + u_{n+1} + f(T^n(\omega)) u_n,
\end{equation}
where $T$ is an invertible ergodic map on a measure space $(\Omega,\mathcal B,\mu)$, $f: \Omega\to\mathbb{R}$ is bounded, and $\omega \in \Omega$.

These operators have been the subject of much research in the setting where $\mu$ is a probability measure; see, for example, \cite{CL, CFKS, D15, DF16, J07, PF} and references therein.

The subject of this paper is to explore the infinite measure setting, where $\mu(\Omega)=\infty$ and $\Omega$ is $\sigma$-finite. Infinite ergodic theory is an active area of research, but the corresponding ergodic Schr\"odinger operators have not been discussed in the literature from a global perspective.

Our work may be regarded as an initial step in the general analysis of such operators. The potentials considered here are defined by the iteration of one invertible map $T$, and hence by a $\Z$-action. This setting arises, for example, in the study \cite{G} of Schr\"odinger operators with potentials generated by almost primitive (but non-primitive) substitutions; see \cite{Y} for a discussion of the infinite invariant measures arising in that context.

Potentials generated by higher rank group actions on infinite measure spaces arise in a natural way in the analysis of quasi-periodic continuum Schr\"odinger operators via Aubry duality, compare \cite{DG}.

In the probability measure setting, the theory of ergodic Schr\"odinger operators relies on two properties of ergodic maps:
\begin{enumerate}[(i)]
\item Almost-sure constancy of invariant functions: if $f:\Omega\to\mathbb{R}$ and $f\circ T =f$ holds $\mu$-a.e., then there is a value $c\in\mathbb{R}$ such that $f = c$ $\mu$-a.e.;

\item Birkhoff's theorem: if $f \in L^1(\Omega, \mu)$ and $\mu(\Omega)=1$, then for $\mu$-a.e.\ $\omega \in \Omega$,
\begin{equation}
\lim_{n\to\infty} \frac1n\sum_{k=0}^{n-1}\limits f(T^{k}\omega) = \int f \, d\mu.
\end{equation}
\end{enumerate}
If $\mu(\Omega)=\infty$, the property (i) still holds; however, the asymptotics of Birkhoff averages become much more complicated. There is Hopf's ergodic theorem which considers ratios of Birkhoff averages for two different $L^1$ functions, but we have not found it to be of use in this setting, partly since the functions we consider are typically $L^\infty$ but not $L^1$. For our purposes, (ii) is replaced by the property
\begin{enumerate}
\item[(ii')] if $f \in L^1(\Omega, \mu)$, $\mu(\Omega)=\infty$, and $\Omega$ is $\sigma$-finite, then for $\mu$-a.e.\ $\omega \in \Omega$,
\begin{equation}\label{ml05}
\lim_{n\to\infty} \frac1n\sum_{k=0}^{n-1}\limits f(T^{k}\omega) = 0
\end{equation}
\end{enumerate}
which is an easy consequence of Hopf's ergodic theorem (see e.g.\ \cite[Exercise~2.2.1, p.~61]{A}).

To fix terminology, given a measure space $(\Omega,\mathcal B, \mu)$ and a measurable transformation $T:\Omega\to\Omega$, we will say that $T$ is \emph{invertible} if $T$ is bijective and $T^{-1}$ is measurable, \emph{measure-preserving} if $\mu(T^{-1}E) = \mu(E)$ for all $E \in \mathcal B$, \emph{ergodic} if $T^{-1}E = E$ implies $\mu(E)= 0$ or $\mu(\Omega\setminus E) =0$, and \emph{non-singular} if $\mu(E) = 0$ if and only if $\mu(T^{-1}E) = 0$. We will often assume in addition that the transformation $T$ is \emph{conservative}, which means there is no set $W \in \mathcal B$ with $\mu(W) > 0$ such that the sets $\{ T^{-n} W\}_{n=0}^\infty$ are disjoint. It is known that an invertible ergodic non-singular transformation of a non-atomic measure space is conservative, so this is a natural assumption \cite[Proposition~1.2.1]{A}. 

We will begin with a discussion of non-convergence phenomena for Birkhoff averages of $L^\infty$ functions in Sections~\ref{SBirkhoff} and \ref{sec:ex}. Specifically, Section~\ref{SBirkhoff} constructs an example with non-convergent Birkhoff averages, while Section~\ref{sec:ex} constructs an example in which the Birkhoff averages behave differently in forward and backward time.  Section~\ref{sec.basiccons} establishes basic properties of ergodic Schr\"odinger operators in the infinite measure setting. In the probability measure setting, the density of states measure and the Lyapunov exponent have a central place in the theory; their analogs are discussed in Sections~\ref{sec.dos} and \ref{sec.le}, respectively. In particular the material from Sections~\ref{SBirkhoff} and \ref{sec:ex} is used there to show that some central results known in the probability measure case do not extend to the infinite measure case.

\section{Non-Convergence of Birkhoff Averages} \label{SBirkhoff}

In what follows, let  $(\Omega,\B,\mu)$  be a  $\sigma$-finite measure space.
Denote by
\[
A_{n}(\omega,f,T)
=\frac1n\,\sum_{k=0}^{n-1}\limits f(T^{k}\omega), \quad n \in \N = \{ 1,2,\ldots \},
\]
the corresponding ergodic sums where $T\colon \Omega\to \Omega$ is a $\B$-measurable transformation
and  $f$ is a real-valued $\B$-measurable function.

In this section we prove the following result.

\begin{theorem}\label{thm:0}
Let $T\colon \Omega\to \Omega$ be a conservative, invertible, measure preserving ergodic transformation on $(\Omega,\B,\mu)$. Then there exist a  $\B$-measurable function $f \colon \Omega\to\{0,1\}$ and two strictly increasing sequences $\{p_k\}$ and $\{q_k\}$ of positive integers such that
\begin{equation}\label{eq:result1}
\lim_{k\to\infty} A_{p_k}(\omega,f,T)=1 \quad \text{ and} \quad
\lim_{k\to\infty} A_{q_k}(\omega,f,T)=0 \quad \text{{\rm (}$\mu$-a.e. $\omega \in \Omega${\rm )}.}
\end{equation}
In particular, we have
\begin{equation}
\limsup_{n\to\infty} A_{n}(\omega,f,T)=1 \h2 \text{ and } \h2
\liminf_{n\to\infty} A_{n}(\omega,f,T)=0 \qquad \text{{\rm (}$\mu$-a.e.} \ \omega\in \Omega{\rm )}.
\end{equation}
\end{theorem}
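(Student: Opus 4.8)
The plan is to take $f=\chi_\Lambda$ for a single measurable set $\Lambda\subseteq\Omega$ (both $\Lambda$ and $\Omega\setminus\Lambda$ necessarily of infinite measure) and to construct $\Lambda$ together with the two sequences by a recursion that alternately forces the averages up toward $1$ (at the times $p_k$) and down toward $0$ (at the times $q_k$). First I would fix a $\sigma$-finite exhaustion $\Omega_1\subseteq\Omega_2\subseteq\cdots$ with $\bigcup_m\Omega_m=\Omega$ and $\mu(\Omega_m)<\infty$, and reduce the pointwise statement \eqref{eq:result1} to an averaged one. Indeed, since $T$ is measure preserving, for any finite-measure seed $\Omega_m$ and any $N$ one has $\int_{\Omega_m}A_N(\cdot,\chi_\Lambda,T)\,d\mu=\frac1N\sum_{j=0}^{N-1}\mu(\Omega_m\cap T^{-j}\Lambda)$, so it suffices to build $\Lambda$ so that, for each fixed $m$, $A_{p_k}(\cdot,\chi_\Lambda,T)\to1$ and $A_{q_k}(\cdot,\chi_\Lambda,T)\to0$ in $L^1(\Omega_m,\mu)$, with errors that are summable in $k$. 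A Chebyshev/Markov estimate together with the Borel--Cantelli lemma applied on each $\Omega_m$, followed by a diagonalization over $m$ and over a sequence of tolerances $\eps\to0$, then upgrades this $L^1$-control along the \emph{fixed} sequences $\{p_k\},\{q_k\}$ to the desired $\mu$-a.e.\ convergence.

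The heart of the matter is then the recursive construction of $\Lambda$, which rests on one elementary observation and one application of property (ii$'$). The observation is that the forward saturation $S(E,N):=\bigcup_{j=0}^{N-1}T^{j}E$ has the property that every $x\in E$ satisfies $T^{j}x\in S(E,N)$ for all $0\le j<N$; hence if $\Lambda\supseteq S(\Omega_k,p_k)$ then $A_{p_k}(x,\chi_\Lambda,T)=1$ for \emph{all} $x\in\Omega_k\supseteq\Omega_m$, $m\le k$, giving the upward swing exactly. The input from (ii$'$) is that any set $C$ of finite measure has zero orbit-occupation density, i.e.\ $A_N(\cdot,\chi_C,T)\to0$ $\mu$-a.e., and hence (being bounded, on the finite-measure set $\Omega_k$) also in $L^1(\Omega_k)$. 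I would then build $\Lambda=\bigcup_{k\ \mathrm{odd}}U_k$ in alternating stages, choosing the scales $p_1<q_1<p_2<q_2<\cdots$ to grow extremely fast. At an odd (upward) stage $k$ I commit $U_k:=S(\Omega_k,p_k)\setminus D_{<k}$ to $\Lambda$, where $D_{<k}$ is the union of the finitely many regions previously reserved for $\Omega\setminus\Lambda$; at an even (downward) stage $k$ I reserve $V_k:=S(\Omega_k,q_k)\setminus\Lambda_{<k}$ for the complement, where $\Lambda_{<k}$ is the finitely many saturations already placed in $\Lambda$. At every finite stage only finitely many finite-measure saturations have been committed, so the previously committed material is a set of finite measure; choosing $p_k$ (resp.\ $q_k$) large enough that this finite set has occupation integral over $\Omega_k$ below $2^{-k}$ (possible by (ii$'$) and dominated convergence) guarantees that the set subtractions $\setminus D_{<k}$ and $\setminus\Lambda_{<k}$ perturb the target average on $\Omega_k$ by at most a summable error, while the permanent subtractions keep the two families $\{U_k\}$ and $\{V_k\}$ disjoint so that later stages cannot retroactively spoil earlier swings.

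The routine verifications are: that $S(\Omega_k,q_k)=(\Lambda_{<k}\cap S(\Omega_k,q_k))\sqcup V_k$ with $V_k$ staying in $\Omega\setminus\Lambda$ forever, so $\Lambda\cap S(\Omega_k,q_k)$ is of finite measure and $A_{q_k}(\cdot,\chi_\Lambda,T)\to0$ on $\Omega_k$; and the symmetric statement for the upward swings. The main obstacle is precisely the bookkeeping that makes the commitments \emph{permanent and simultaneously valid on all seeds} $\Omega_1,\dots,\Omega_k$: one must verify that at each stage the newly chosen huge scale renders the entire previously committed (finite-measure) region negligible in density, which is exactly where property (ii$'$) is indispensable, since in the infinite-measure setting one cannot appeal to a single tower that fills up all but an $\eps$-fraction of $\Omega$. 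Once the recursion is arranged so that the per-stage errors are summable, the Borel--Cantelli argument over the exhaustion delivers \eqref{eq:result1}, and the $\limsup$/$\liminf$ statement follows immediately.
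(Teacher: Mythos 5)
Your construction is correct and is essentially the paper's own argument: both alternate forward saturations of finite-measure sets, dominate the error at each stage by the Birkhoff average of the indicator of the previously committed finite-measure region (which vanishes by property (ii$'$)), and conclude via Borel--Cantelli. The only cosmetic difference is that the paper saturates a single seed $Y$ of finite positive measure --- its sets $Y_k=\bigcup_{j=0}^{N_k-1}T^j Y$ play simultaneously the role of your exhaustion and of your saturations $S(\Omega_k,p_k)$ --- and then extends from a.e.\ point of $Y$ to a.e.\ point of $\Omega$ by ergodicity at the very end, whereas you carry an exhaustion $\Omega_1\subseteq\Omega_2\subseteq\cdots$ throughout.
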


We will need the following lemma.

\begin{lemma}\label{lem:muinf1}
Let $(\Omega,\B,\mu,T)$ be as above. Let $g\in L^{1}(\Omega,\mu)$ be an integrable function
and let  $Y\in\B$ be a measurable subset of finite measure, $0<\mu(Y)<\infty$.
Then, for every $\eps>0$ and an integer $M\geq1$, there exists an integer
$N=\Phi(\eps,g,Y,M)>M$, such that
\[
\mu\big(\{\omega \in Y\,\uvm{1.2}:\,|A_{N}(\omega,g,T)|>\eps\}\big)<\eps.
\]
\end{lemma}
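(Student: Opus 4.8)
The plan is to deduce the lemma directly from property (ii') of the introduction, combined with the elementary measure-theoretic fact that on a set of \emph{finite} measure almost-everywhere convergence implies convergence in measure. Since the genuinely analytic input — Hopf's ergodic theorem, which is what yields (ii') — is already available to us, this argument amounts to a soft translation rather than a new estimate; there is no need to invoke a maximal inequality or the Hopf decomposition directly.

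Concretely, I would proceed in three steps. First, apply (ii') to the integrable function $g$: because $\Omega$ is $\sigma$-finite, we are in the infinite measure setting, and $g \in L^1(\Omega,\mu)$, property (ii') gives
\[
\lim_{n\to\infty} A_n(\omega,g,T) = 0 \qquad \text{for } \mu\text{-a.e.\ } \omega \in \Omega,
\]
and in particular for $\mu$-a.e.\ $\omega \in Y$. Second, exploit $0 < \mu(Y) < \infty$: restricting to the finite measure space $(Y,\B|_Y,\mu|_Y)$, the measurable functions $\omega \mapsto A_n(\omega,g,T)$ converge to $0$ almost everywhere, hence (by Egorov's theorem, or directly) converge to $0$ in measure, so that for the fixed $\eps > 0$,
\[
\lim_{n\to\infty} \mu\big(\{\omega \in Y : |A_n(\omega,g,T)| > \eps\}\big) = 0 .
\]
Third, since this is convergence of a sequence of real numbers to $0$, there is an index $N_0$ beyond which every term is below $\eps$; taking $N = \max\{N_0, M+1\}$ produces an $N > M$ with $\mu(\{\omega \in Y : |A_N(\omega,g,T)| > \eps\}) < \eps$, and we may set $\Phi(\eps,g,Y,M) = N$. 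Note that this in fact establishes the inequality for \emph{all} sufficiently large $N$, which is stronger than the stated existence of a single $N > M$.

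I expect the only step demanding any care to be the passage from pointwise a.e.\ convergence to convergence in measure. This is precisely where the hypothesis $\mu(Y) < \infty$ is essential and cannot be dropped — a.e.\ convergence does not imply convergence in measure on a set of infinite measure — whereas the quantifier over $M$ is handled trivially once convergence in measure has been secured.
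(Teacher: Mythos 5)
Your proposal is correct and follows essentially the same route as the paper: the authors likewise deduce the lemma from the a.e.\ relation $\lim_{N\to\infty} A_N(\omega,g,T)=0$ supplied by Hopf's theorem (property (ii')), leaving implicit exactly the step you spell out, namely that on the finite-measure set $Y$ almost-everywhere convergence upgrades to convergence in measure. Your version merely makes that standard passage explicit, and your observation that the conclusion in fact holds for all sufficiently large $N$ is consistent with how the lemma is used later.
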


The statement in the above lemma follows from the relation
\[
\lim_{N\to\infty} A_{N}(\omega,g,T) = 0, \quad \text{for $\mu$-a.e.} \ \omega\in \Omega,
\]
which in turn follows from Hopf's ergodic theorem, as noted in the introduction.

For a measurable subset $Y\in\B$, denote by $\theta_{Y}$  the characteristic function of $Y$.  Clearly,  $\theta_{Y}\in L^{1}(\Omega,\mu)$ if and only if $\mu(Y)<\infty$.

\begin{proof}[Proof of Theorem \ref{thm:0}]
Select a set $Y\in\B$ of finite positive  measure, $0<\mu(Y)<\infty$. Set  $Y_{0}=Y$,
$N_{0}=1$ and construct inductively for $k\geq1$:
\begin{equation}\label{eq:nk}
N_{k}=\Phi(2^{-k},\theta_{Y_{k-1}},Y,N_{k-1}); \quad
Y_{k}=\bigcup_{j = 0}^{N_{k}-1} T^{j}(Y);\quad \text{and}
\end{equation}
\begin{equation}\label{eq:zk}
Z_k=\{\omega \in Y\,\uvm{1.2}:\, |A_{N_k}(\omega,\theta_{Y_{k-1}},T)|>2^{-k}\}.
\end{equation}

Note that we have inclusions $Y=Y_0\subset Y_1\subset Y_2\subset\cdots$, and, by the ergodicity of $T$, $\mu(\Omega\!\setminus\bigcup_{k\geq 0} Y_k)=0$. In view of Lemma \ref{lem:muinf1},
\begin{equation}\label{ineq:zk}
\mu(Z_k)<2^{-k}.
\end{equation}

Define $f\in L^{\infty}(\Omega)$ by the formula
\begin{equation}
f(\omega)=
\begin{cases}
0, &\text{ if }\ \omega \in Y;\\
(1+(-1)^{k})/2,  &\text{ if }\ \omega \in Y_k\!\setminus\! Y_{k-1}, \ k\geq1.
\end{cases}
\end{equation}

Then  we have
\[
A_{N_{k}}(\omega,\theta_{Y_{k-1}},T)\leq2^{-k}, \quad \text{for }\omega\in Y\!\setminus\! Z_{k},
\]
and
\[
\{T^j\omega :0\leq j\leq N_k-1\}\subset Y_k.
\]

If $k$ is odd, then $f(\omega)=0$ for $\omega \in Y_k\!\setminus\! Y_{k-1}$ and hence
\begin{equation}\label{ineq:odd}
0\leq A_{N_k}(\omega,f,T)\leq A_{N_k}(\omega,\theta_{Y_{k-1}},T)\leq2^{-k},\quad
\text{for }\omega \in Y\!\setminus\! Z_{k},
\end{equation}
since $f(\omega)\leq \theta_{Y_{k-1}}(\omega)$  for $\omega \in Y_{k}$.

Similarly, if $k \geq 2$ is even, then $f(\omega)=1$ for $\omega \in Y_k\!\setminus\! Y_{k-1}$
and hence
\[
0\leq A_{N_k}(\omega,1-f,T)
\leq A_{N_k}(\omega,\theta_{Y_{k-1}},T)\leq2^{-k},\quad
\text{for }\omega\in Y\!\setminus\! Z_{k},
\]
since $1-f(\omega)\leq \theta_{Y_{k-1}}(\omega)$  for $\omega\in Y_{k}$. It follows that
\begin{equation}\label{ineq:even}
1
\geq A_{N_k}(\omega,f,T)
\geq 1-2^{-k},\quad
\text{for }\omega\in Y\!\setminus\! Z_{k}.
\end{equation}

By the Borel-Cantelli lemma, the inequality \eqref{ineq:zk} implies that $\mu(W)=0$  where
\[
W=\limsup Z_k=\bigcap_{n\geq1}\Big(\bigcup_{k=n}^\infty Z_k \Big).
\]
In view of the inequalities \eqref{ineq:odd} and \eqref{ineq:even}, we obtain
\begin{equation}\label{eq:extend}
\lim_{k\to\infty} A_{N_{2k+1}}(\omega,f,T)=0, \quad \lim_{k\to\infty} A_{N_{2k}}(\omega,f,T)=1,
\end{equation}
for all $\omega\in Y\!\setminus\! W$.

Since $\mu(Y\!\setminus\! W)=\mu(Y)>0$ and $T$ is ergodic, the relations \eqref{eq:extend} extend to $\mu$-a.e.\ $\omega \in \Omega$. One takes $p_k=N_{2k}$ and $q_k=N_{2k+1}$ to complete the proof of Theorem \ref{thm:0}.
\end{proof}

\section{Different Behaviors for Birkhoff Averages in Forward and Backward Time}\label{sec:ex}

We describe an example of a conservative, invertible, measure preserving ergodic transformation $(\Omega,\B,\mu,T)$ and a $\B$-measurable function $f\colon \Omega\to \{0,1\}$ such that
\begin{equation}\label{eq:an1}
\limsup_{n\to+\infty} A_{n}(\omega,f,T)=1,
\end{equation}
while
\begin{equation}\label{eq:an0}
\lim_{n\to+\infty} A_{n}(\omega,f,T^{-1})=0,
\end{equation}
(both) for all $\omega \in \Omega$.

We wish to thank Benjy Weiss for referring us to an old paper by Dowker and Erd\H{o}s \cite{DE}, where similar examples are described (in a somewhat different setting).

Set $K=(0,1]=\R/\Z$ to be the (left-open and right-closed) unit interval naturally identified with the circle, let $\alp$ be any badly approximable irrational number (i.e.~one with bounded partial quotients), and denote by $R\colon K\to K$ the $\alp$-rotation on $K$ (determined by the identity $R(x)=x+\alp \pmod1$).

One proceeds by setting
\begin{equation}\label{eq:X}
\Omega=\big\{(u,m)\in K\times \Z:  1\leq m\leq h(u)\big\}\subset\R^2,
\end{equation}
where\, $h\colon K\to\R$ is defined by the formula
\[
h(u):=2^{\,2^{^{\frac{u+1}u}}}, 
\quad u\in K.
\]

Next, one defines the function $f\colon \Omega\to \R$\, by the formula
\begin{equation}\label{eq:f}
f(u,m)=
\begin{cases}
1, &\text{ if } \ m\leq \sqrt{h(u)}=2^{\,2^{^{\frac1u}}}, \\ 
0, &\text{ otherwise,}
\end{cases}
\end{equation}
and the map  $T\colon \Omega\to \Omega$ by the formula
\begin{equation}\label{eq:T}
T(u,m)=\begin{cases}
(u,m+1),& \text{ if }\ (u,m+1)\in \Omega;\\
(R(u), 1),& \text{ otherwise}.
\end{cases}
\end{equation}

In other words, $(\Omega,T)$ is the suspension $\Z$-flow over the rotation $(K,R)$ with the delay function $\big[h(u)\big]$. Note that $h$ is strictly decreasing and
\[
\min_{u\in K}\,h(u)=h(1)=16.
\]
Finally, consider the product of Lebesgue measure on $\R$ and counting measure on $\Z$, and let $\mu$ denote the restriction of this measure to $\Omega$. Since  $\mu(\Omega)=\int_0^1 [h(t)]\,dt=\infty$, it follows that $T$ is a  conservative, invertible, measure preserving ergodic transformation on the infinite measure space $(\Omega,\mathcal B, \mu)$ (see e.g.\ \cite[Section~1.2]{DE}).

 For these choices of $(\Omega, T)$ and $f$, we shall validate both relations \eqref{eq:an1} and \eqref{eq:an0} (in Subsections \ref{sec:proof1} and~\ref{sec:proof2}, respectively).

\subsection{Some notation}\label{ss.notation}

Let $\omega_0=(u_0,m_0)\in \Omega$ be fixed. Set
\begin{equation}\label{eq:xk}
\omega_k=(u_k,m_k)=T^k \omega_0, \quad \forall k\in\Z.
\end{equation}
It is enough to prove \eqref{eq:an1} and \eqref{eq:an0} for $\omega = \omega_0$,  under the added assumption that $m_0=1$ (because $(u,1)=T^{-(m-1)}(u,m)$  lies in the  $T$-orbit of every $(u,m)\in \Omega$).

Since the set
\[
\{k\in\Z: u_{k-1}\neq u_k\}\subset\Z
\]
contains $0$ and is unbounded from both below and above, it could be uniquely arranged into an infinite two-sided increasing sequence of integers ${\bf t}=(t_n)_{n\in\Z}$, with $t_0=0$:
\[
\cdots < t_{- 2 } < t_{-1} < t_0 = 0 < t_1 < t_2 < \cdots.
\]

The set of integers $\Z$ is partitioned into finite subsets
\begin{equation}\label{eq:wk}
V_k=\big[t_k,t_{k+1}\big)\cap\Z,\qquad|V_k|=t_{k+1}-t_k=\big[h(\bet_k)\big]
\end{equation}
where
\begin{equation}\label{eq:bk}
\beta_k = u_{t_k} = R^k(u_0).
\end{equation}
In fact, we have
\begin{equation}\label{eq:un}
u_n=u_{t_k}=\beta_k=R^k(u_0),\quad \forall n\in V_k.
\end{equation}

Next we set
\begin{equation}\label{eq:pk}
p_k=\big[h(\bet_k)\big]=t_{k+1}-t_k=|V_k|, \quad q_k=\big[\sqrt{p_k}\big], \qquad \forall k\in\Z.
\end{equation}
Then the following $p_k$-tuples of $0$'s and $1$'s coincide:
\begin{equation}\label{eq:block1}
\big(f(\omega_n)\big)_{n=t_k}^{t_{k+1}-1}=\big((1)_{q_k},(0)_{p_k-q_k}\big) :=
    (\,\underbrace{1,\ldots,1}_{q_k\text{ times}},\underbrace{0,\ldots,0}_{\ p_k-q_k
    \text{ times}}\!), \quad \forall k\in\Z.
\end{equation}

Since $t_0=0$, the second equality in \eqref{eq:wk} implies that
\begin{subequations}\label{eq:tn}
\begin{align}
t_n\ &=\sum_{k=1}^n (t_k-t_{k-1})=\sum_{k=0}^{n-1}\,p_k, \quad \forall n\geq0, \quad \text{ and}\label{eq:tn1}\\
t_{-n}&=-\hspm2\sum_{k=-n+1}^0 (t_k-t_{k-1})
=-\hspm2\sum_{k=-n}^{-1} p_k=\\&=-\sum_{k=1}^{n} p_{-k}, \quad \forall n\in\N\notag.
\end{align}
\end{subequations}

\subsection{Proof of \eqref{eq:an1}}\label{sec:proof1}

Since $\alpha$ is irrational, the sequence  $(\bet_n)_{n=0}^\infty$ is dense in $K=(0,1]$ (see \eqref{eq:bk}), so it achieves its minimum infinitely many times. That is, the set
\[
S=\{n\geq2: \bet_n=\mu_n\}=\{n\geq2: \mu_n<\mu_{n-1}\}
\]
is infinite where
\[
\mu_n=\min_{0\leq k\leq n} \bet_k,\quad\forall n\in \N.
\]

Since $\alp$ is a badly approximable irrational, for all integers $n\geq1$, the $n+1$ points $\bet_k,\,k=0,1,2,\ldots,n$, are all different and they partition \mbox{$K=(0,1]$} (viewed as the unit circle) into subintervals of proportional lengths. By ``proportional lengths'' we mean that the ratio of the lengths of any two such subintervals is bounded by a constant $c_1=c_1(\alp)>1$, which is independent of $n$. This follows, for example, from the three distance theorem, compare \cite{AB}.

It follows that $\mu_{n}\leq \frac{c_1}{n+1}<\frac {c_1}n$, and that, for all $0\leq k\leq n$ such that $\bet_k\neq \mu_n$, we have  $\frac{\mu_n}{\bet_k-\mu_n}\leq c_1$ and hence  $\frac{\bet_k}{\mu_n}\geq \frac{1+c_1}{c_1}$. Set $c_2=\frac{1+c_1}{c_1}>1$ to be a new constant.

Then, for the above constants  $c_1,c_2 > 1$, we have
\begin{equation}\label{ineqs}
1<c_2\leq \frac{\bet_k}{\bet_n}; \quad \bet_n<\frac {c_1}n, \qquad
   \forall n\in S\ \text{\ {\small and }\, } 0\leq k\leq n-1,
\end{equation}
because $\bet_n = \mu_n$, for all $n\in S$.

The following estimate for all $n\in S$ follows from \eqref{eq:tn1}, \eqref{ineqs} and the fact that $h$ is decreasing:
\begin{align*}
t_n=\sum_{k=0}^{n-1}\,[h(\bet_k)]
\leq \sum_{k=0}^{n-1}\,h(c_2\bet_n)
=n\,h(c_2\bet_n)
<\frac{c_1}{\bet_n}\,h(c_2\bet_n)=\frac{c_1}{\mu_n}\,h(c_2\mu_n).
\end{align*}
Observe that $q_n=\Big[\sqrt{h(\bet_n)}\,\Big]=\Big[\sqrt{h(\mu_n)}\,\Big]$\, for $n\in S$
(see \eqref{eq:pk}) and that
\begin{equation}\label{eq:tgn}
\lim_{\substack{n\in S\\n\to\infty}}\,\frac{t_n}{q_n}=0,
\end{equation}
because $\displaystyle\lim_{n\to\infty} \mu_n=0$\, and $\displaystyle\lim_{x\to 0+} \uvm{1.1}{\tfrac{c_1\,h(c_2x)}{x\sqrt{h(x)}}}=0$ (here the assumption $c_2>1$ is used).

In order to prove \eqref{eq:an1}, it is enough to show that
\begin{equation}\label{eq:an2}
\lim_{\substack{\\n\in S\\n\to+\infty}}  A_{t_n+q_n}(\omega_0,f,T)=1
\end{equation}
because $A_n(\omega_0,f,T)\leq1$, $\forall n\in\N$.
We start by splitting the sum
\begin{align*}
A_{t_n+q_n}(\omega_0,f,T)
&=\frac1{t_n+q_n}\sum_{k=0}^{t_n+q_n-1}f(T^k\omega_0)=\\
   &=\frac1{t_n+q_n}\,\Bigg(\bigg(\sum_{k=0}^{t_n-1}+\sum_{k=t_n}^{t_n+q_n-1}\bigg)\,f(T^k\omega_0)\Bigg)
   =\frac{S_1+S_2}{t_n+q_n},
\end{align*}
where
\[
0\leq S_1=\sum_{k=0}^{t_n-1}\,f(T^k\omega_0)\leq t_n
\]
and, for $n\in S$,
\[
S_2
=\sum_{k=t_n}^{t_n+q_n-1}\,f(T^k\omega_0)
=\sum_{k=t_n}^{t_n+q_n-1}\,1=q_n
\]
(in view of the definition of $f$ and since $q_n=\Big[\sqrt{h(\bet_n)}\Big]$, for $n\in S$). It follows from \eqref{eq:tgn} that
\[
0\leq\limsup_{\substack{\\n\in S\\n\to+\infty}}\,\frac{S_1}{t_n+q_n}\leq
     \limsup_{\substack{\\n\in S\\n\to+\infty}}\,\frac{t_n}{t_n+q_n}=0
\]
and that
\[
\lim_{\substack{\\n\in S\\n\to+\infty}}\frac{S_2}{t_n+q_n}=\lim_{\substack{\\n\in S\\n\to+\infty}}\frac{q_n}{t_n+q_n}=1,
\]
whence \eqref{eq:an2} follows. This completes the proof of \eqref{eq:an1}.

\subsection{More notation and estimates}

We assume the conventions and notation introduced above, in particular \eqref{eq:xk}, \eqref{eq:wk} and \eqref{eq:tn}. We also set new sequences
\begin{equation*}
V'_k=-V_{-k},\ t'_{k}=-t_{-k}, \ p'_k=p_{-k},\ q'_k=q_{-k}, \quad\forall k\in\Z.
\end{equation*}

Then we have
\begin{equation*}
\ldots<t'_{-2}<t'_{-1}<t'_0=0<t'_1<t'_2<\ldots,
\end{equation*}
and
\begin{equation*}
V'_k=(t'_{k-1},t'_k]\cap\Z=\big[t'_{k-1}+1,t'_k\big]\cap\Z.
\end{equation*}

Thus (see \eqref{eq:pk})
\begin{equation}\label{eq:vksize}
|V'_k|=|V_{-k}|=\big[h(\beta_{-k})\big]=p_{-k}=p'_{k}, \quad \forall k\in\Z.
\end{equation}
\vsp1

\begin{prop}\label{prop:block}
For all $k\in\Z$,  we have
\[
(f(\omega_{-t'_k}),f(\omega_{-(t'_k-1)}),\ldots,f(\omega_{-(t'_{k-1}+1)}))=(1)_{q'_k}(0)_{p'_k-q'_k}.
\]
\end{prop}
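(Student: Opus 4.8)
The plan is to recognize this backward-time block identity as a direct reindexing of the forward-time identity \eqref{eq:block1}. The only content is bookkeeping with the primed sequences $t'_k,p'_k,q'_k$ introduced above, and the crucial observation is that the \emph{decreasing} enumeration of subscripts in the statement becomes an \emph{increasing} run of the original indices once one passes through the reflection $n\mapsto -n$.

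First I would identify the set of subscripts appearing on $\omega$ in the displayed tuple. They are $-n$ as $n$ ranges over $\{t'_{k-1}+1,\dots,t'_k\}=V'_k$, and hence form the set $-V'_k=V_{-k}$ by the definition $V'_k=-V_{-k}$. Moreover, since the tuple lists these with $n$ decreasing from $t'_k$ to $t'_{k-1}+1$, the subscripts $-n$ increase; using $-t'_k=t_{-k}$ and $-(t'_{k-1}+1)=t_{-k+1}-1$ (the latter from $t'_{k-1}=-t_{-k+1}$ together with $t_{-k+1}-t_{-k}=p_{-k}=p'_k$), one sees that the tuple is precisely $\big(f(\omega_n)\big)_{n=t_{-k}}^{t_{-k+1}-1}$, traversed in increasing order.

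Then I would apply \eqref{eq:block1} with $k$ replaced by $-k$, which is permissible since that identity was established for all $k\in\Z$. This yields $\big(f(\omega_n)\big)_{n=t_{-k}}^{t_{-k+1}-1}=\big((1)_{q_{-k}},(0)_{p_{-k}-q_{-k}}\big)$, and substituting the definitions $q_{-k}=q'_k$ and $p_{-k}=p'_k$ gives exactly $(1)_{q'_k}(0)_{p'_k-q'_k}$, as claimed. I expect no genuine obstacle here; the single point demanding care is confirming that the reflection reverses the listing order so that the tuple in the statement agrees with (and is not the reverse of) the forward block $(1)_{q_{-k}}(0)_{p_{-k}-q_{-k}}$, which one checks directly from the fact that $-t'_k=t_{-k}$ is the first subscript.
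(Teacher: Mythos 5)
Your argument is correct and is essentially the paper's own proof: both rewrite the subscripts via $-t'_k=t_{-k}$ and $-(t'_{k-1}+1)=t_{-(k-1)}-1$ to identify the tuple with $\big(f(\omega_n)\big)_{n=t_{-k}}^{t_{-(k-1)}-1}$ and then invoke \eqref{eq:block1} with $k$ replaced by $-k$. Your explicit check that the reflection reverses the listing order (so the block is not read backwards) is the right point to be careful about, and it is implicitly the same verification the paper performs.
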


\begin{proof}
The verification is straightforward:
\begin{align*}
&(f(\omega_{-t'_k}),f(\omega_{-(t'_k-1)}),\ldots,f(\omega_{-(t'_{k-1}+1)}))
=\\
&=(f(\omega_{t_{-k}}),f(\omega_{t_{-k}+1}),\ldots,f(\omega_{t_{-(k-1)}-1}))=\\
&=(f(\omega_{n}))_{n=t_{-k}}^{t_{-(k-1)}-1}=
(1)_{q'_k}(0)_{p'_k-q'_k} \qquad (\text{here }\eqref{eq:block1}\text{ is used)}.
\end{align*}
\end{proof}

\begin{prop}\label{prop:sumblock}
For all $k\in\Z$,  we have
\[
\sum_{n\in V'_k} f(\omega_{-n})=q'_k.
\]
\end{prop}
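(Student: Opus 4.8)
The plan is to read this off directly from Proposition~\ref{prop:block}, since the quantity being summed is just the sum of the entries of the explicit tuple computed there. The key observation is that the index set $V'_k = (t'_{k-1}, t'_k] \cap \Z = \{t'_{k-1}+1, t'_{k-1}+2, \ldots, t'_k\}$ is precisely the set of integers $n$ for which the value $f(\omega_{-n})$ appears as an entry of the tuple
\[
(f(\omega_{-t'_k}), f(\omega_{-(t'_k-1)}), \ldots, f(\omega_{-(t'_{k-1}+1)}))
\]
from Proposition~\ref{prop:block}. The only difference is bookkeeping: Proposition~\ref{prop:block} lists these values with $n$ running in \emph{decreasing} order (from $n = t'_k$ down to $n = t'_{k-1}+1$), whereas the sum $\sum_{n \in V'_k} f(\omega_{-n})$ ranges over the same indices in increasing order. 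Since the value of a finite sum is independent of the order in which its terms are added, the two agree.

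First I would invoke Proposition~\ref{prop:block} to replace the family $\{ f(\omega_{-n}) : n \in V'_k \}$ by the tuple $(1)_{q'_k}(0)_{p'_k - q'_k}$, which has length $p'_k = |V'_k|$ (consistent with \eqref{eq:vksize}). Then I would simply sum its entries: there are exactly $q'_k$ entries equal to $1$ and exactly $p'_k - q'_k$ entries equal to $0$, so
\[
\sum_{n \in V'_k} f(\omega_{-n}) = q'_k \cdot 1 + (p'_k - q'_k) \cdot 0 = q'_k,
\]
which is the claim.

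There is no real obstacle here; the content of the statement is entirely contained in Proposition~\ref{prop:block}, and the present proposition is a one-line arithmetic corollary recording the numerical sum for later use (presumably in the proof of \eqref{eq:an0} in Subsection~\ref{sec:proof2}). The only point requiring any care at all is matching the index convention, namely confirming that the reversal of order in the tuple relative to the summation index does not affect the total.
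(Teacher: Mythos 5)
Your proof is correct and is exactly the paper's argument: the paper's proof of this proposition is a one-line reference to Proposition~\ref{prop:block}, and you have simply spelled out the (trivial) counting of the $q'_k$ ones in the tuple $(1)_{q'_k}(0)_{p'_k-q'_k}$ together with the irrelevance of the ordering of the summands. Nothing further is needed.
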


\begin{proof}
This follows from Proposition \ref{prop:block}.
\end{proof}

\subsection{Proof of \eqref{eq:an0}}\label{sec:proof2}

Denote
\begin{equation}\label{eq:andef}
a_n=\frac1n\ \sum_{k=1}^n f(\omega_{-k}),\quad n\geq1.
\end{equation}
where  $\omega_k=T^k\omega_0=(u_k,m_k)$ for $k\in\Z$.

In order to prove \eqref{eq:an0}, it is enough to show that
\begin{equation}\label{eq:anlim}
\lim_{n\to+\infty} a_n=0.
\end{equation}

\begin{lemma}\label{eq:anbetween}
For integers $n\geq t'_1$, we have $0<a_n<1$.
\end{lemma}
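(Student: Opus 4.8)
The plan is to read everything off from the explicit block description of $(f(\omega_{-k}))_{k\ge1}$ recorded in Proposition~\ref{prop:block}, combined with the elementary two-sided bound $0<q'_k<p'_k$. Since $f$ is $\{0,1\}$-valued, the bounds $0\le a_n\le1$ are automatic, so the entire content of the lemma is \emph{strictness}. By the definition \eqref{eq:andef} of $a_n$, one has $a_n>0$ precisely when at least one of $f(\omega_{-1}),\ldots,f(\omega_{-n})$ equals $1$, and $a_n<1$ precisely when at least one of them equals $0$. Hence it suffices to exhibit, within the index range $\{1,\ldots,n\}$, at least one occurrence of each symbol.

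First I would check that every block genuinely contains both symbols. By \eqref{eq:pk} and \eqref{eq:vksize} we have $q'_k=\big[\sqrt{p'_k}\,\big]$ with $p'_k=\big[h(\beta_{-k})\big]\ge\min_{u\in K}h(u)=16$; since $p'_k\ge16$ forces $4\le\sqrt{p'_k}<p'_k$, this gives $1\le q'_k<p'_k$. Thus in the string $(1)_{q'_k}(0)_{p'_k-q'_k}$ from Proposition~\ref{prop:block} both the run of ones and the run of zeros are nonempty. I apply this to the first block $V'_1=[t'_0+1,t'_1]\cap\Z=\{1,\ldots,t'_1\}$, whose length is $|V'_1|=p'_1=t'_1$.

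Now take $n\ge t'_1$, so that the summation range $\{1,\ldots,n\}$ contains all of $V'_1$. Reading Proposition~\ref{prop:block} at $k=1$ (its tuple runs over $n$ \emph{decreasing} from $t'_1$ down to $1$), the ones sit at the top of the block, namely $f(\omega_{-n})=1$ for $n\in\{t'_1-q'_1+1,\ldots,t'_1\}$, while $f(\omega_{-n})=0$ for $n\in\{1,\ldots,t'_1-q'_1\}$. Consequently $\sum_{k=1}^{n}f(\omega_{-k})\ge q'_1\ge1$, giving $a_n>0$; and since the range also contains at least the $p'_1-q'_1\ge1$ zeros of $V'_1$, we get $\sum_{k=1}^{n}f(\omega_{-k})\le n-(p'_1-q'_1)\le n-1$, hence $a_n<1$.

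I expect no genuine obstacle: once the block decomposition is in hand, the argument is pure bookkeeping. The only points demanding attention are the inequality $p'_k\ge16$ (which is exactly what yields $0<q'_k<p'_k$, i.e.\ that no block is all-ones or all-zeros), and the orientation of the tuple in Proposition~\ref{prop:block}; I would verify carefully that the ones occupy the large-$n$ end and the zeros the small-$n$ end of $V'_1$ before concluding which indices carry which value.
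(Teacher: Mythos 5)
Your proof is correct and follows essentially the same route as the paper: both reduce the statement to exhibiting one value $1$ and one value $0$ among $f(\omega_{-1}),\ldots,f(\omega_{-n})$, which Proposition~\ref{prop:block} with $k=1$ supplies since $p'_1\ge 16$ forces $1\le q'_1<p'_1$ (the paper simply points to $f(\omega_{-t'_1})=1$ and $f(\omega_{-1})=0$). Your extra care about the orientation of the tuple and the nonemptiness of both runs is exactly the bookkeeping the paper leaves implicit.
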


\begin{proof}
This follows from \eqref{eq:andef} because $f$ is $\{0,1\}$-valued and $f(\omega_{-1})=0$ while $f(\omega_{-t'_1})=1$ (in view of Proposition~\ref{prop:block} with $k=1$).
\end{proof}

\begin{lemma}\label{lem:ineqan}
Assume that for some $k\geq2$ we have $n\in V'_k=(t'_{k-1},t'_k]\cap\Z$. Then
\[
a_n\leq\max(a_{t'_{k-1}},a_{t'_k}).
\]
\end{lemma}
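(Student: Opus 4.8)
The plan is to track how the running average $a_n = S_n/n$, with $S_n = \sum_{j=1}^n f(\omega_{-j})$, evolves as $n$ runs through the block $V'_k = (t'_{k-1},t'_k]\cap\Z$, using the key structural fact that on this block the values $f(\omega_{-n})$ form a single run of $0$'s followed by a single run of $1$'s. Everything reduces to two elementary monotonicity observations, one for each run.

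First I would record the precise block structure from Proposition~\ref{prop:block}: reading the indices $-n$ as $n$ increases from $t'_{k-1}+1$ to $t'_k$, one has $f(\omega_{-n})=0$ for $t'_{k-1}<n\le t'_k-q'_k$ and $f(\omega_{-n})=1$ for $t'_k-q'_k<n\le t'_k$. Both ranges are nonempty, since $1\le q'_k<p'_k$ (because $p'_k=[h(\beta_{-k})]\ge 16$). I would then handle the two runs separately. On the initial run of $0$'s the partial sum is frozen, $S_n=S_{t'_{k-1}}$, so $a_n = S_{t'_{k-1}}/n$ is strictly decreasing in $n$; as every such $n$ exceeds $t'_{k-1}$, this yields $a_n< a_{t'_{k-1}}$.

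On the final run of $1$'s I would set $j = n-(t'_k-q'_k)\in\{1,\dots,q'_k\}$ and write
\[
a_n = \frac{S_{t'_{k-1}} + j}{(t'_k-q'_k) + j}.
\]
This mediant-type ratio is nondecreasing in $j$ exactly because its numerator never exceeds its denominator, i.e.\ because $S_{t'_{k-1}}\le t'_{k-1}\le t'_k-q'_k$, where the first inequality uses that $f$ is $\{0,1\}$-valued. Hence $a_n\le a_{t'_k}$ throughout this second run. Combining the two runs gives $a_n\le\max(a_{t'_{k-1}},a_{t'_k})$ for every $n\in V'_k$, as claimed.

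The argument is essentially bookkeeping, so I do not anticipate a genuine obstacle; the only point requiring a little care is the monotonicity on the run of $1$'s. It can be phrased more conceptually as the elementary fact that appending a $1$ to an average of $\{0,1\}$-values never decreases it, while appending a $0$ never increases it. This also explains the hypothesis $k\ge 2$: it guarantees $t'_{k-1}\ge t'_1\ge 1$, so that $a_{t'_{k-1}}$ is well defined and all denominators are positive.
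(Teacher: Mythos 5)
Your proof is correct and follows essentially the same route as the paper: both exploit Proposition~\ref{prop:block} to see that, as $n$ increases through $V'_k$, the block consists of a run of $0$'s (along which the average decreases) followed by a run of $1$'s (along which it increases), so the average on the block is dominated by its values at the two endpoints. The only cosmetic difference is that the paper derives the two monotonicity statements from the identity $(n+1)(a_{n+1}-a_n)=f(\omega_{-(n+1)})-a_n$ together with $0<a_n<1$, whereas you compute $a_n$ in closed form on each run; the content is identical.
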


\begin{proof}
The identity $(n+1)(a_{n+1}-a_n)=f(\omega_{-(n+1)})-a_{n}$ (which holds for all $n\geq1$) and Lemma~\ref{eq:anbetween}
imply the inequalities
\[
\begin{cases}
a_{n+1}>a_n, & \text{if }\, f(\omega_{-(n+1)})=1,\\
a_{n+1}<a_n, & \text{if }\, f(\omega_{-(n+1)})=0,
\end{cases}
\qquad \text{ (for }n\geq t'_1).
\]

By Proposition \ref{prop:block}, we obtain the inequalities
\begin{align*}
a_{t'_k}&>a_{t'_k-1}>a_{t'_k-2}>\ldots >a_{t'_k-q'_k}=\\
&=a_{t'_k-q'_k}<a_{t'_k-q'_k-1}<\ldots<a_{t'_{k-1}+1}<a_{t'_{k-1}},
\end{align*}
whence the claim of Lemma \ref{lem:ineqan} follows.
\end{proof}

We conclude from Lemma \ref{lem:ineqan} that in order to establish the limit \eqref{eq:anlim}, it suffices to do it only over the subsequence $(t'_k)$, i.\,e.  to prove that
\begin{equation}\label{eq:avk}
\lim_{k\to+\infty} a_{t'_k}=0.
\end{equation}
\vspm3

We have
$\displaystyle a_{t'_k}=\frac1{t'_k}\sum_{i=1}^{t'_{k}}f(\omega_{-i})$.
Since $\big[1,t'_{k}\big]\cap\N$ can be partitioned into the disjoint union
\[
\big[1,t'_{k}\big]\cap\N=\cup_{j=1}^{k} V'_j,
\]
and since $|V'_j|=p'_j$  and  $\displaystyle\sum_{n\in V'_j}\,f(\omega_{-n})=q'_j$ (see \eqref{eq:vksize} and Proposition \ref{prop:sumblock}, respectively), we obtain
\[
a_{t'_k}=\frac{\sum_{j=1}^{k}q'_j}{\sum_{j=1}^{k}p'_j}.
\]
(Recall that $p'_j=p_{-j}=[h(R^{-j}(u_0))]\geq16$ and $q'_j=\left[\sqrt{p'_j}\ \right]\leq \sqrt{p'_j}\geq4$, see \eqref{eq:vksize}, \eqref{eq:un} and~\eqref{eq:pk}).

By the Cauchy-Schwarz (or Jensen) inequality, we have
\[
\Big(\sum_{j=1}^{k}q'_j\Big)^2\leq k\sum_{j=1}^k (q'_j)^2\leq k\sum_{j=1}^k p'_j
\]
whence
\[
(a_{t'_k})^2=\left(\frac{\sum_{j=1}^{k}q'_j}{\sum_{j=1}^{k}p'_j}\right)^2\leq \frac k{\sum_{j=1}^{k}p'_j}.
\]

It remains to prove that
\[
\lim_{k\to+\infty}\frac{\sum_{j=1}^k p'_j}k=+\infty
\]
because then \eqref{eq:avk} and hence \eqref{eq:an0} follow. But
\[
\frac{\sum_{j=1}^k p'_j}k=\frac{\sum_{j=1}^k \big[h(R^{-j}(u_0))\big]}k
\]
is just the $k$-th ergodic average of the positive function $\big[h(u)\big]$ for the irrational rotation $R^{-1}$ evaluated at $u_0$. Therefore
\[
\lim_{k \to +\infty}\frac{\sum_{j=1}^k p'_j}k=\int_{0}^{1} \big[h(u)\big]\,du=+\infty,
\]
completing the proof of \eqref{eq:an0}.

\section{Basic Consequences of Ergodicity}\label{sec.basiccons}

Throughout the remainder of the paper, let $(\Omega, \mathcal{B}, \mu)$ denote a $\sigma$-finite measure space with $\mu(\Omega) = \infty$, $T:\Omega \to \Omega$ a conservative, invertible, ergodic, measure-preserving transformation, and $f:\Omega\to\R$  bounded and measurable. For each $\omega \in \Omega$,  $H_\omega = \Delta + V_\omega $ is defined by \eqref{eq:schro.op}. Throughout, $S:\ell^2(\mathbb{Z}) \to \ell ^2 (\mathbb{Z})$ will denote the left shift $S: \delta_n \mapsto \delta_{n-1}$.

\begin{lemma} \label{l:regfcts:weak.msrbl}
If $h:\R \to \mathbb{C}$ is a locally bounded Borel function, then the family
$\{h(H_{\omega})\}_{\omega \in \Omega}$ is weakly measurable in the sense that
$$
I_h^{\phi, \psi}(\omega) := \langle \phi, h(H_{\omega})\psi \rangle
$$
defines a measurable function $\Omega \to \mathbb{C}$ for all $\phi, \psi \in \ell^2(\mathbb{Z})$.
\end{lemma}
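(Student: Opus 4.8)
The plan is to first reduce to bounded Borel functions on a fixed compact interval, then establish measurability on a generating class (polynomials, hence continuous functions), and finally promote this to all bounded Borel functions by a functional monotone class (Baire function) argument. Since $f$ is bounded, each $H_\omega = \Delta + V_\omega$ is a bounded self-adjoint operator with $\|H_\omega\| \le 2 + \|f\|_\infty =: C$, so $\sigma(H_\omega) \subseteq I := [-C,C]$ for \emph{every} $\omega$. Because $h(H_\omega)$ depends only on the restriction $h|_I$, and a locally bounded Borel $h$ is bounded on the compact set $I$, it suffices to prove the claim for bounded Borel $h \colon I \to \C$. I would introduce the class $\DD$ of all bounded Borel functions $h\colon I \to \C$ for which $I_h^{\phi,\psi}$ is measurable for all $\phi,\psi \in \ell^2(\Z)$; by linearity of $h \mapsto h(H_\omega)$ and of the inner product, $\DD$ is a complex vector space, and the goal is to show $\DD$ contains every bounded Borel function.

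First I would treat polynomials. For $h(x)=x^k$ the matrix element $\langle \delta_m, H_\omega^k \delta_n\rangle$ expands as a finite sum of products of entries of $H_\omega$, each entry being either a constant (coming from $\Delta$) or of the form $f(T^j\omega)$; since $f$ and $T$ are measurable, $\omega \mapsto f(T^j\omega)$ is measurable, and hence so is $\omega \mapsto \langle \delta_m, H_\omega^k \delta_n\rangle$ for all $m,n,k$. Expanding $\phi = \sum_m \phi_m \delta_m$ and $\psi = \sum_n \psi_n \delta_n$ and invoking the uniform bound $\|H_\omega^k\| \le C^k$, the double series for $\langle \phi, H_\omega^k \psi\rangle$ converges with measurable partial sums, so its limit is measurable; thus $\DD$ contains all polynomials. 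For continuous $h \in C(I)$, Stone--Weierstrass gives polynomials $p_\ell \to h$ uniformly on $I$, and since $\sigma(H_\omega) \subseteq I$ the functional calculus yields $\|p_\ell(H_\omega) - h(H_\omega)\| = \sup_{\sigma(H_\omega)}|p_\ell - h| \le \sup_I |p_\ell - h| \to 0$ \emph{uniformly in} $\omega$. Consequently $I_{p_\ell}^{\phi,\psi} \to I_h^{\phi,\psi}$ pointwise, and $h \in \DD$.

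The crux, and the step I expect to be the main obstacle, is the passage from continuous to arbitrary bounded Borel $h$. I would show that $\DD$ is closed under bounded pointwise limits: if $h_\ell \in \DD$ with $\sup_\ell \|h_\ell\|_\infty < \infty$ and $h_\ell \to h$ pointwise on $I$, then for each fixed $\omega$ the spectral theorem gives $\|h_\ell(H_\omega)\psi - h(H_\omega)\psi\|^2 = \int_I |h_\ell - h|^2 \, d\mu_\psi^\omega \to 0$ by dominated convergence, where $\mu_\psi^\omega$ is the (finite) spectral measure of $H_\omega$ at $\psi$. Hence $I_{h_\ell}^{\phi,\psi}(\omega) \to I_h^{\phi,\psi}(\omega)$ pointwise in $\omega$, so $I_h^{\phi,\psi}$ is measurable as a pointwise limit of measurable functions, giving $h \in \DD$. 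Since the bounded Borel functions on the metric space $I$ form the smallest family containing $C(I)$ that is closed under bounded pointwise limits (the Baire/functional monotone class theorem), it follows that $\DD$ contains every bounded Borel function on $I$, which is exactly the assertion.

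The genuinely delicate point is this final closure argument: the polynomial and continuous cases are immediate from measurability of $\omega \mapsto f(T^j\omega)$ and from norm-continuity of the functional calculus (using the uniform spectral localization $\sigma(H_\omega) \subseteq I$), whereas the Borel case requires the strong-convergence statement $h_\ell(H_\omega) \to h(H_\omega)$ under mere bounded pointwise convergence of symbols, together with a correct invocation of the functional monotone class theorem to conclude that $\DD$ exhausts all bounded Borel functions.
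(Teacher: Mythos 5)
Your proposal is correct and follows essentially the same route as the paper: reduce to bounded Borel functions on the fixed compact interval containing all the spectra, handle polynomials and then continuous functions via Weierstrass, establish closure of the class under uniformly bounded pointwise limits through strong convergence of the functional calculus, and bootstrap to all bounded Borel functions. The only cosmetic difference is that the paper carries out the last step by hand (a $\sigma$-algebra of sets whose characteristic functions lie in the class, followed by uniform approximation by simple functions), whereas you package it as the functional monotone class theorem; both are valid.
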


\begin{proof}
Let $K = [-2-\|f\|_\infty,2+\|f\|_\infty]$. Since $\sigma(H_\omega) \subseteq K$ for $\mu$-a.e.\ $\omega$, it suffices to prove the theorem for bounded Borel functions $K \to \C$. Let $\mathcal{A}$ denote the set of bounded Borel functions $h:K\to\mathbb{C}$ such that $I_h^{\phi,\psi}(\omega)$ is a measurable function $\Omega \to \mathbb{C}$ for all $\phi, \psi \in \ell^2(\mathbb{Z})$. It is clear that $\mathcal{A}$ is a vector subspace of all bounded Borel functions and that it contains the constant function $h\equiv 1$. We will now prove some additional properties of $\mathcal{A}$.

\begin{step} \label{step:id} \normalfont
\textbf{\boldmath $\mathcal{A}$  contains the function $h(x) = x$.} Measurability of $f, T$, and $T^{-1}$ imply that
$$
I_h^{\phi,\psi}(\omega) = \langle \phi, \Delta \psi \rangle + \sum_{k\in\mathbb{Z}} \overline{\phi_k} \psi_k f(T^k\omega)
$$
is a measurable function of $\omega$.
\end{step}

\begin{step} \label{step:prod} \normalfont
\textbf{\boldmath If $g, h \in \mathcal{A}$, then $gh \in \mathcal{A}$.} This follows from
\[
\langle \phi, g(H_\omega) h(H_\omega) \psi \rangle = \sum_{n\in \mathbb{Z}}
\langle \phi, g(H_\omega) \delta_n \rangle \langle \delta_n, h(H_\omega) \psi
\rangle
\]
because products and pointwise limits of measurable functions are measurable.
\end{step}

\begin{step} \label{step:conv} \normalfont
\textbf{\boldmath If $g_n \in \mathcal{A}$ for all $n\in \mathbb{N}$, $g_n$ are uniformly bounded, and $g_n \to g$ pointwise, then $g \in \mathcal{A}$.} If $g_n$ are uniformly bounded and converge to $g$ pointwise, then $g_n(H_\omega)\stackrel{s}{\longrightarrow} g(H_\omega)$ by \cite[Theorem 3.1]{T}, so $I_{g_n}^{\phi,\psi}$ converge pointwise to $I_g^{\phi,\psi}$. As the pointwise limit of measurable functions, $I_g^{\phi,\psi}$ is measurable.
\end{step}

\begin{step} \label{step:CK} \normalfont
\textbf{\boldmath $C(K) \subset \mathcal{A}$.} Since $\mathcal{A}$ is an algebra and contains the functions $1$ and $x$, it contains all polynomials. Since it is closed under uniform limits, by Weierstrass' theorem $\mathcal{A}$ contains all continuous functions.
\end{step}

\begin{step} \label{step:char2} \normalfont
\textbf{\boldmath The set $\mathcal{E}$ of Borel sets $B \subset K$ such that $\chi_B \in \mathcal{A}$ is a $\sigma$-algebra.} It is clear that $\emptyset \in\mathcal{E}$. Since $\chi_B \in \mathcal{A}$ implies $\chi_{K\setminus B} = 1 - \chi_B \in \mathcal{A}$, $\mathcal{E}$ is closed under taking complements. If $B_1, B_2 \in \mathcal{E}$, then $B_1 \cap B_2 \in \mathcal{E}$ because $\chi_{B_1 \cap B_2} = \chi_{B_1} \chi_{B_2}$. Thus, $\mathcal{E}$ is closed under finite intersections and therefore finite unions. Finally, $\chi_{\cup_{n=1}^\infty B_n} = \lim_{N\to\infty} \chi_{\cup_{n=1}^N B_n}$ implies that $\mathcal{E}$ is closed under countable unions.
\end{step}

\begin{step} \label{step:char1} \normalfont
\textbf{\boldmath $\chi_B \in \mathcal{A}$ for all Borel sets $B \subset K$.} For any closed $F \subset K$, the characteristic function $\chi_F$ is the limit of continuous functions $\max(1 - n \mathrm{dist}(x,F), 0)$ as $n\to\infty$, by Step~\ref{step:CK}, $\chi_F \in \mathcal{A}$. Thus, the $\sigma$-algebra $\mathcal{E}$ contains all closed sets, so it contains $\mathcal B$, the Borel $\sigma$-algebra.
\end{step}

\begin{step} \label{step:char1} \normalfont
\textbf{\boldmath $h \in \mathcal{A}$ for all bounded Borel functions $h:K\to\mathbb{C}$.} The set $\mathcal{A}$ contains all simple functions as linear combinations of characteristic functions. Since every bounded Borel function can be uniformly approximated by simple functions, $\mathcal{A}$ contains all bounded Borel functions.
\end{step}
\end{proof}

\begin{lemma} \label{l:weak.msrbl.proj}
Let $(\Omega, \mathcal{B}, \mu)$ be as above, and suppose that $T$ is ergodic, invertible, and conservative.  Suppose further that $\{P_{\omega}\}_{\omega \in \Omega}$ is a weakly
measurable family of orthogonal projections such that $P_{T\omega} = SP_{\omega}S^*$.  Then $\tr(P_\omega) = \dim(\textup{range}(P_{\omega}))$ is $\mu$-almost surely constant -- moreover, the $\mu$ almost sure value of $\tr(P_\omega)$ must be either 0 or $\infty$.
\end{lemma}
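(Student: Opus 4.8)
The plan is to set $N(\omega) := \tr(P_\omega) = \sum_{n \in \Z} \langle \delta_n, P_\omega \delta_n\rangle \in \{0,1,2,\ldots\}\cup\{\infty\}$ and to proceed in three stages: (a) measurability and $T$-invariance of $N$; (b) almost sure constancy via ergodicity; and (c) exclusion of finite nonzero values via conservativity. First I would note that $N$ is measurable, since each summand $\omega \mapsto \langle \delta_n, P_\omega \delta_n\rangle = I^{\delta_n,\delta_n}(\omega)$ is measurable by weak measurability of the family, and a countable sum of nonnegative measurable functions is measurable.

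For invariance, the key observation is that $S$ is \emph{unitary} on $\ell^2(\Z)$, so the covariance relation $P_{T\omega} = S P_\omega S^*$ exhibits $P_{T\omega}$ as a unitary conjugate of $P_\omega$. Since the trace is invariant under unitary conjugation (even when the value is $+\infty$), we get $N(T\omega) = N(\omega)$ for every $\omega$. As $N$ is an everywhere $T$-invariant measurable function and $T$ is ergodic, property~(i) of the introduction gives a constant $c \in \{0,1,2,\ldots\}\cup\{\infty\}$ with $N = c$ $\mu$-a.e.

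The heart of the argument is stage (c): ruling out $0 < c < \infty$. I would set $g(\omega) := \langle \delta_0, P_\omega \delta_0\rangle$, which is measurable with $0 \le g \le 1$. Iterating the covariance relation gives $P_{T^k\omega} = S^k P_\omega (S^*)^k$, and since $(S^*)^k \delta_0 = \delta_k$ one computes $g(T^k\omega) = \langle \delta_0, P_{T^k\omega}\delta_0\rangle = \langle \delta_k, P_\omega \delta_k\rangle$. Because all diagonal entries of the projection $P_\omega$ are nonnegative and sum to $\tr(P_\omega)$, this yields the crucial bound
\[
\sum_{k=0}^{n-1} g(T^k\omega) = \sum_{k=0}^{n-1} \langle \delta_k, P_\omega \delta_k\rangle \le \tr(P_\omega) = c \quad\text{for every } n,
\]
valid for $\mu$-a.e.\ $\omega$; hence $\sum_{k=0}^\infty g(T^k\omega) \le c < \infty$ a.e. On the other hand, if $c > 0$ then $g$ cannot vanish $\mu$-a.e.\ --- otherwise $\langle \delta_k, P_\omega \delta_k\rangle = g(T^k\omega) = 0$ a.e.\ for every $k$ (using non-singularity of $T$), and intersecting over the countably many $k$ would force $N = 0$ a.e. Thus $\mu(\{g \ge \eps\}) > 0$ for some $\eps > 0$, and by the standard recurrence theorem for conservative ergodic transformations (see \cite{A}) we have $\sum_{k=0}^\infty \chi_{\{g\ge\eps\}}(T^k\omega) = \infty$ for $\mu$-a.e.\ $\omega$, so $\sum_{k=0}^\infty g(T^k\omega) \ge \eps\sum_{k=0}^\infty \chi_{\{g\ge\eps\}}(T^k\omega) = \infty$ a.e. This contradicts the bound above, so $c \in \{0,\infty\}$.

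I expect stage (c) to be the main obstacle: stages (a) and (b) are routine once one notices that $S$ is unitary, but (c) is where the infinite measure setting genuinely enters. The decisive idea is to manufacture from the covariance relation a single nonnegative observable $g$ whose forward Birkhoff sums along each orbit are bounded by the (assumed finite) trace; conservativity then forbids such a $g$ from having positive integral, pinning the constant to $0$ or $\infty$. The one point requiring care is justifying that a nonnegative $L^\infty$ (not necessarily $L^1$) function with positive integral has divergent Birkhoff sums under a conservative ergodic map, which I would handle by passing to the indicator of a sublevel set $\{g \ge \eps\}$ of positive measure and invoking Poincar\'e-type recurrence.
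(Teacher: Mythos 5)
Your proposal is correct and follows essentially the same route as the paper: the same observable $\langle \delta_0, P_\omega\delta_0\rangle$, the same use of covariance to identify its orbit sums with the diagonal of $P_\omega$, and the same appeal to Poincar\'e recurrence on a positive-measure set where that observable is bounded below to force the trace to be $0$ or $\infty$. The only cosmetic differences are that you phrase the dichotomy as a contradiction with an assumed finite nonzero value rather than showing directly that $c>0$ forces divergence, and you invoke recurrence of the sublevel set $\{g\ge\eps\}$ where the paper uses recurrence of the function values; these are equivalent.
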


\begin{proof}
By weak measurability, $\langle \delta_n, P_\omega \delta_n \rangle $ is a measurable function of $\omega \in \Omega$ for each $n \in \Z$, so $Q(\omega) := \mbox{tr}(P_{\omega}) $ is a measurable
function of $\omega$.  Moreover, $Q$ is $T$-invariant, since
\begin{align*}
Q(T\omega) & = \sum_{n\in \mathbb{Z}} \langle \delta_n , P_{T\omega}\delta_n\rangle \\
& = \sum_{n\in \mathbb{Z}} \langle \delta_n , SP_{\omega}S^*\delta_n\rangle \\
& = \sum_{n\in \mathbb{Z}} \langle \delta_{n+1} , P_{\omega}\delta_{n+1}\rangle \\
& = Q(\omega).
\end{align*}
Thus, by ergodicity of $T$, there exists some $c \in [0, \infty]$ such that $Q(\omega) = c$ for $\mu$-almost every $\omega$.  To conclude the proof, we note that $Q \geq 0$, so it suffices to show that $c>0$ implies $c = \infty$.  To that end, assume that $c>0$ and consider $f:\Omega \to \R$ defined by
\begin{equation}
f(\omega)
=
\langle \delta_0, P_\omega \delta_0 \rangle.
\end{equation}
Notice that $f$ is nonnegative and that
\begin{equation}
f(T^n \omega)
=
\langle \delta_0, P_{T^n\omega} \delta_0 \rangle
=
\left\langle \delta_0, S^n P_\omega \left(S^*\right)^n \delta_0 \right\rangle
=
\langle \delta_n, P_\omega \delta_n \rangle,
\end{equation}
so $f$ need not be $T$-invariant.  However, $f$ cannot vanish almost everywhere, for, if $f(\omega) = 0$ for $\mu$-almost every $\omega$, then by taking a countable intersection of sets of full $\mu$-measure, we would have a full-measure set of $\omega$ with $f(T^n \omega) = 0$ for all $n \in \Z$ and hence
$$
Q(\omega)
=
\sum_{n \in \Z} f(T^n\omega) = 0
$$
for all such $\omega$, i.e. $c=0$.  In particular, since $f$ does not vanish almost everywhere, we may choose $\delta > 0$ and $\Omega_1 \subseteq \Omega$ with $\mu(\Omega_1) > 0$ and $f(\omega) \geq 2\delta$ for all $\omega \in \Omega_1$.  By removing a set of $\mu$-measure zero from $\Omega_1$, we may assume without loss that $Q(\omega) = c$ for all $\omega \in \Omega_1$ as well.  By Poincar\'e recurrence (\cite[Theorem~1.1.5]{A}), we may throw out yet another set of measure zero to get
$$
\liminf_{n \to \infty} | f(\omega) - f(T^n\omega) | = 0
$$
for all $\omega \in \Omega_1$.  Thus, to every $\omega \in \Omega_1$ there corresponds a sequence $n_j = n_j(\omega) \to \infty $ with
\begin{equation}
f(T^{n_j}\omega) \geq \delta
\end{equation}
for all $j$.  Evidently then,
\begin{equation}
Q(\omega)
=
\sum_{n \in \Z} f(T^n \omega) = \infty
\end{equation}
for all $\omega \in \Omega_1$. Therefore, $c = \infty$, as claimed.
\end{proof}

Notice that Lemma \ref{l:weak.msrbl.proj} need not hold if $T$ is dissipative.  Indeed, consider $\Omega = \Z$ endowed with counting measure and $T:n \mapsto n-1$.  For $n \in \Z$, let $P_n$ denote orthogonal projection onto the one-dimensional subspace spanned by $\delta_n$.  It is easy to see that
$$
SP_nS^* = P_{n-1} = P_{Tn},
$$
but $\tr(P_n) = 1$ for all $n \in \Z$.

\begin{theorem} \label{t:as.spectrum}
There exists a compact set $\Sigma \subseteq \mathbb{R}$ such that $\sigma(H_{\omega}) = \Sigma$ for $\mu$-almost every $\omega \in \Omega$.
\end{theorem}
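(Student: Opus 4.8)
The plan is to reduce the statement to the previously established Lemma~\ref{l:weak.msrbl.proj} by examining the spectral projections of $H_\omega$ onto open intervals with rational endpoints. The starting point is the covariance relation $H_{T\omega} = S H_\omega S^*$. This follows from a direct computation: the free Laplacian commutes with the shift $S$, while conjugating the diagonal potential $V_\omega$ by $S$ reproduces $V_{T\omega}$, since $S V_\omega S^* \delta_n = f(T^{n+1}\omega)\,\delta_n = V_{T\omega}\delta_n$. Because $S$ is unitary, the functional calculus then gives $h(H_{T\omega}) = S\,h(H_\omega)\,S^*$ for every bounded Borel function $h$; in particular, for $h = \chi_I$, the spectral projection $P_\omega^I := \chi_I(H_\omega)$ satisfies $P_{T\omega}^I = S P_\omega^I S^*$.

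Next I would fix an open interval $I$ with rational endpoints. By Lemma~\ref{l:regfcts:weak.msrbl}, applied with the bounded Borel function $h = \chi_I$, the family $\{P_\omega^I\}_{\omega\in\Omega}$ is weakly measurable, and the covariance hypothesis $P_{T\omega}^I = S P_\omega^I S^*$ has just been verified. Lemma~\ref{l:weak.msrbl.proj} therefore applies and yields that $\tr(P_\omega^I)$ is $\mu$-almost surely equal to a constant lying in $\{0,\infty\}$. Since an orthogonal projection vanishes exactly when its trace is zero, there is a full-measure set on which the event $\{P_\omega^I = 0\}$ either holds for all $\omega$ or fails for all $\omega$.

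I would then enumerate the countably many open intervals $\{I_j\}_{j\in\N}$ with rational endpoints and intersect the corresponding full-measure sets to obtain a single set $\Omega_0$ of full measure. On $\Omega_0$, for each $j$ the statement ``$P_\omega^{I_j} = 0$'' is independent of $\omega$. Since $\chi_I(H_\omega) = 0$ for an open interval $I$ is equivalent to $\sigma(H_\omega)\cap I = \emptyset$ (the spectrum being the support of the spectral measure), this means the collection of rational open intervals disjoint from $\sigma(H_\omega)$ does not depend on $\omega \in \Omega_0$. As $\R \setminus \sigma(H_\omega)$ is open, it is exactly the union of those rational open intervals it contains, so both $\R \setminus \sigma(H_\omega)$ and hence $\sigma(H_\omega)$ are constant on $\Omega_0$. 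Denoting this common value by $\Sigma$ and recalling that $\sigma(H_\omega) \subseteq K = [-2-\|f\|_\infty,\,2+\|f\|_\infty]$ for a.e.\ $\omega$, we conclude that $\Sigma$ is closed and bounded, hence compact.

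The only genuinely delicate point is the dichotomy $\tr(P_\omega^I) \in \{0,\infty\}$, which is precisely where the infinite-measure setting departs from the classical probability-measure case; here it is supplied directly by Lemma~\ref{l:weak.msrbl.proj}, so that the real content is the covariance and weak measurability of $\chi_I(H_\omega)$, and the remainder is routine measure-theoretic bookkeeping over the countable family $\{I_j\}$.
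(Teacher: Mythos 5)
Your proposal is correct and follows essentially the same route as the paper: weak measurability of $\chi_{(p,q)}(H_\omega)$ from Lemma~\ref{l:regfcts:weak.msrbl}, the $0$-or-$\infty$ trace dichotomy from Lemma~\ref{l:weak.msrbl.proj}, and an intersection over the countably many rational intervals, with the complement of the spectrum recovered as a union of rational intervals. The only (welcome) addition is that you explicitly verify the covariance $P^I_{T\omega} = S P^I_\omega S^*$, which the paper leaves implicit.
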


\begin{proof}
For $-\infty < p < q < \infty$, Lemma \ref{l:regfcts:weak.msrbl} implies that $ (\chi_{(p,q)}(H_\omega))_{\omega \in \Omega} $ is a weakly measurable family of projections.  Let $d_{p,q}$ denote the almost sure value of $\tr(\chi_{(p,q)}(H_{\omega}))$, which
is either $0$ or $\infty$ by Lemma \ref{l:weak.msrbl.proj}.  Next, let $\Omega_{p,q}$ denote the (full measure) set of $\omega\in\Omega$ for which
$\tr(\chi_{(p,q)}(H_{\omega})) = d_{p,q}$ and define
$$
\Omega_0 := \bigcap_{p<q, p,q \in \mathbb{Q}} \Omega_{p,q},
$$
which is a set of full measure.  Now, for all $\omega, \tilde{\omega}\in\Omega_0$, we claim that
$\sigma(H_{\omega})=\sigma(H_{\tilde{\omega}})$.  To see this, assume $E \in \mathbb{R} \setminus \sigma(H_{\omega})$.  Then we can choose
$p < q$ rational with $E \in (p,q) \subseteq \mathbb{R} \setminus \sigma(H_{\omega}) $.  One then has
$$
0 = \mbox{tr}(\chi_{(p,q)}(H_{\omega})) = d_{p,q} = \mbox{tr}(\chi_{(p,q)}(H_{\tilde{\omega}})),
$$
which implies that $E \in \mathbb{R} \setminus \sigma(H_{\tilde{\omega}})$.  By symmetry, we are done.
\end{proof}

\begin{coro}
For all $E \in \mathbb{R}$, one has $\mu(\{\omega : E\in \sigma_{\mathrm p}(H_{\omega})\})=0$.
\end{coro}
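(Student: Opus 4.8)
The plan is to fix $E \in \R$ and feed the spectral projections onto the eigenvalue $E$ into Lemmas~\ref{l:regfcts:weak.msrbl} and~\ref{l:weak.msrbl.proj}. Concretely, set $P_\omega := \chi_{\{E\}}(H_\omega)$. For each $\omega$ this is the orthogonal projection onto the spectral subspace of $H_\omega$ associated with the one-point set $\{E\}$, which for a self-adjoint operator is exactly the eigenspace $\ker(H_\omega - E)$. Thus $\tr(P_\omega)$ equals the multiplicity of $E$ as an eigenvalue of $H_\omega$ (with the convention that this is $0$ when $E$ is not an eigenvalue), and the assertion $\mu(\{\omega : E \in \sigma_{\mathrm p}(H_\omega)\}) = 0$ is equivalent to $\tr(P_\omega) = 0$ for $\mu$-a.e.\ $\omega$. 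The whole proof therefore reduces to computing the almost sure value of $\tr(P_\omega)$.

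First I would verify the hypotheses of Lemma~\ref{l:weak.msrbl.proj}. Weak measurability of $\{P_\omega\}$ is immediate from Lemma~\ref{l:regfcts:weak.msrbl}, since $\chi_{\{E\}}$ is a bounded (indeed locally bounded) Borel function. For the covariance relation $P_{T\omega} = S P_\omega S^*$, I would first record the operator-level identity $S H_\omega S^* = H_{T\omega}$: with the convention $S\delta_n = \delta_{n-1}$ one has $(S^*u)_n = u_{n-1}$ and a direct computation gives $(S H_\omega S^* u)_n = u_{n-1} + u_{n+1} + f(T^{n+1}\omega) u_n = (H_{T\omega} u)_n$. Since $S$ is unitary, the Borel functional calculus then yields $P_{T\omega} = \chi_{\{E\}}(S H_\omega S^*) = S \chi_{\{E\}}(H_\omega) S^* = S P_\omega S^*$. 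Lemma~\ref{l:weak.msrbl.proj} now applies and tells us that $\tr(P_\omega)$ is $\mu$-a.s.\ equal to either $0$ or $\infty$.

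The crux -- and the only step using the one-dimensional structure -- is to exclude the value $\infty$. Here I would invoke the fact that every eigenspace of a whole-line discrete Schr\"odinger operator is finite dimensional: any solution of the second-order difference equation $(H_\omega u)_n = E u_n$ is completely determined by the two consecutive values $u_0, u_1$, so the space of all solutions is two dimensional and $\dim \ker(H_\omega - E) \le 2$. (In fact the constancy of the Wronskian $u_{n+1}v_n - u_n v_{n+1}$ forces two $\ell^2$ solutions to be linearly dependent, giving the sharper bound $\le 1$, but finiteness is all that is needed.) Consequently $\tr(P_\omega) = \dim\ker(H_\omega - E)$ is finite for every $\omega$, which rules out the value $\infty$ and forces the almost sure value to be $0$. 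Hence $P_\omega = 0$ for $\mu$-a.e.\ $\omega$, i.e.\ $E$ fails to be an eigenvalue of $H_\omega$ almost surely, which is the claim. I expect the main obstacle to be precisely this last dichotomy-breaking step; everything preceding it is a matter of checking that the two lemmas apply, whereas ruling out $\infty$ genuinely requires the finite multiplicity peculiar to one-dimensional operators.
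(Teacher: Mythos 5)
Your proposal is correct and follows essentially the same route as the paper: apply Lemma~\ref{l:weak.msrbl.proj} to the weakly measurable, covariant family $\chi_{\{E\}}(H_\omega)$ to get the $0$--$\infty$ dichotomy for the trace, then rule out $\infty$ because the solution space of the second-order difference equation is two-dimensional, so $\dim\ker(H_\omega-E)\le 2$. The only difference is that you spell out the covariance check $SH_\omega S^* = H_{T\omega}$, which the paper leaves implicit.
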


\begin{proof}
Since the space of sequences $u:\Z \to \C$ with
$$
u_{n-1} + u_{n+1} + V_\omega(n) u_n
$$
is two-dimensional, it follows that
$$
\tr(\chi_{\{E\}}(H_{\omega}))
=
\dim(\ker(H_\omega - E))
\leq
2
<
\infty
$$
for all $\omega$.  By Lemma \ref{l:weak.msrbl.proj}, the almost sure value of
$\tr(\chi_{\{E\}}(H_{\omega}))$ must be zero, i.e. $E$ is $\mu$-almost surely not an eigenvalue of $H_{\omega}$.
\end{proof}

\begin{coro}
One has $\mu(\{\omega : \sigma_{\disc}(H_{\omega}) \neq \emptyset \})=0$.
\end{coro}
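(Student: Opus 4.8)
The plan is to combine the almost sure constancy of the spectrum with the elementary fact that a subset of $\R$ has at most countably many isolated points, together with the preceding corollary on the absence of fixed eigenvalues.

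First I would let $\Sigma$ and $\Omega_0$ denote the compact set and the full-measure set produced in Theorem~\ref{t:as.spectrum}, so that $\sigma(H_\omega)=\Sigma$ for every $\omega\in\Omega_0$. Next I would record that the set $D$ of isolated points of $\Sigma$ is at most countable: around each isolated point one may place an open interval with rational endpoints meeting $\Sigma$ in that single point, and distinct isolated points yield disjoint such intervals, whence $D$ injects into a countable family. The central observation is that for $\omega\in\Omega_0$ one has the inclusion $\sigma_{\disc}(H_\omega)\subseteq D$: by definition every point of $\sigma_{\disc}(H_\omega)$ is an eigenvalue of $H_\omega$ that is isolated in $\sigma(H_\omega)=\Sigma$, and isolation places it in $D$.

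Now I would invoke the first corollary above, namely that for each fixed $E\in\R$ one has $\mu(\{\omega:E\in\sigma_{\mathrm p}(H_\omega)\})=0$. Applying this to each of the countably many $E\in D$ and taking a union of null sets, the set
\[
\Omega_1=\{\omega\in\Omega: E\notin\sigma_{\mathrm p}(H_\omega)\text{ for all }E\in D\}
\]
has full measure. For $\omega\in\Omega_0\cap\Omega_1$, any hypothetical element of $\sigma_{\disc}(H_\omega)$ would lie in $D$ (by the inclusion above) and be an eigenvalue of $H_\omega$, contradicting the definition of $\Omega_1$; hence $\sigma_{\disc}(H_\omega)=\emptyset$ on the full-measure set $\Omega_0\cap\Omega_1$, which is exactly the claim.

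There is essentially no difficult step here; the only points requiring a modicum of care are the countability of $D$ (which is what permits summing the null sets coming from the preceding corollary) and the observation that, on $\Omega_0$, an isolated eigenvalue of $H_\omega$ is forced to be an isolated point of the fixed set $\Sigma$. Everything else is a routine assembly of Theorem~\ref{t:as.spectrum} and the two corollaries; I do not anticipate any genuine obstacle.
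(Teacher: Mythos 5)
Your proof is correct, but it takes a genuinely different route from the paper's. The paper argues directly: if $E\in\sigma_{\disc}(H_\omega)$, pick rationals $p<q$ with $(p,q)\cap\sigma(H_\omega)=\{E\}$; then $\tr(\chi_{(p,q)}(H_\omega))$ is finite and nonzero, which contradicts the $0$-or-$\infty$ dichotomy of Lemma~\ref{l:weak.msrbl.proj}, i.e.\ $\omega$ lies outside the full-measure set $\Omega_{p,q}$ (and hence outside $\Omega_0$) from the proof of Theorem~\ref{t:as.spectrum}. You instead treat Theorem~\ref{t:as.spectrum} and the preceding corollary as black boxes: the a.s.\ constancy of the spectrum confines any discrete eigenvalue of $H_\omega$ (for $\omega\in\Omega_0$) to the countable set $D$ of isolated points of $\Sigma$, and then the no-fixed-eigenvalue corollary is applied to each $E\in D$ and the resulting null sets are summed. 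Both arguments ultimately rest on the same dichotomy lemma, but yours only uses the \emph{statements} of the two preceding results rather than the internal sets $\Omega_{p,q}$, at the cost of an extra (easy) countability step; the paper's version is a one-line contradiction. One cosmetic remark: your claim that distinct isolated points yield \emph{disjoint} rational intervals is not quite right as stated, but it is also not needed --- the map $E\mapsto(p,q)$ is injective simply because $(p,q)\cap\Sigma=\{E\}$ determines $E$, which already gives countability of $D$.
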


\begin{proof}
Suppose $\sigma_{\disc}(H_{\omega}) \neq \emptyset$. Given $E \in
\sigma_{\disc}(H_{\omega})$, there exist rational numbers $p < q$ such that $ (p,q) \cap \sigma(H_\omega) = \{E\} $, so
$$
\tr(\chi_{(p,q)}(H_{\omega}))
=
\dim(\ker(H_\omega - E))
=
1.
$$
In particular, following the notation in the proof of Theorem~\ref{t:as.spectrum}, $\omega \notin \Omega_{p,q} $, so $ \omega \notin \Omega_0 $.
\end{proof}

The arguments given in \cite{CFKS} and \cite{T00} generalize without modification to establish $\mu$-almost everywhere constancy of the spectral decomposition into absolutely continuous, singular continuous and pure point parts.

\begin{theorem}
There exist compact sets $\Sigma_{\ac},\Sigma_{\sc},\Sigma_{\pp} \subseteq \R$ so that for $\mu$-almost every $\omega \in \Omega$, one has $\sigma_\bullet(H_{\omega}) = \Sigma_\bullet $ for $\bullet \in \{ \ac, \sc, \pp \}$.

\end{theorem}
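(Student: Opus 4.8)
The plan is to mimic the proof of Theorem~\ref{t:as.spectrum}, replacing the spectral projections $\chi_{(p,q)}(H_\omega)$ by their restrictions to the absolutely continuous, singular continuous, and pure point subspaces. First I would record the covariance that drives everything: a direct computation shows $S H_\omega S^* = H_{T\omega}$, so $S$ is a unitary intertwining $H_\omega$ and $H_{T\omega}$. Consequently, if $P_\omega^\bullet$ denotes the orthogonal projection onto the $\bullet$-spectral subspace of $H_\omega$ (for $\bullet\in\{\ac,\sc,\pp\}$), then conjugation by $S$ carries the $\bullet$-subspace of $H_\omega$ onto that of $H_{T\omega}$, giving $P_{T\omega}^\bullet = S P_\omega^\bullet S^*$. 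Since $P_\omega^\bullet$ commutes with $H_\omega$, the operator $R_\omega^{\bullet,p,q} := \chi_{(p,q)}(H_\omega) P_\omega^\bullet$ is, for each pair of rationals $p<q$, an orthogonal projection, and it inherits the covariance $R_{T\omega}^{\bullet,p,q} = S R_\omega^{\bullet,p,q} S^*$.

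The crux is to show that the families $\{P_\omega^\bullet\}$ are weakly measurable; granting this, weak measurability of $\{R_\omega^{\bullet,p,q}\}$ follows from Lemma~\ref{l:regfcts:weak.msrbl} exactly as in Step~\ref{step:prod}. For the pure point part I would invoke the RAGE theorem: writing $P_L$ for the finite-rank projection onto $\{|n|\le L\}$, one has
\[
\langle \phi, P_\omega^{\pp}\phi\rangle = \lim_{L\to\infty}\ \lim_{T\to\infty}\ \frac1T\int_0^T \big\|P_L\, e^{-itH_\omega}\phi\big\|^2\,dt ,
\]
and each $\langle\delta_n, e^{-itH_\omega}\phi\rangle$ is measurable in $\omega$ by Lemma~\ref{l:regfcts:weak.msrbl} (applied to $h(x)=e^{-itx}$), so measurability survives the time averaging and the two limits. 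Polarization then makes $\{P_\omega^{\pp}\}$, and hence $\{I - P_\omega^{\pp}\}$ (projection onto the continuous subspace), weakly measurable. For the absolutely continuous part I would extract it from boundary values of the resolvent: with $m_\phi^\omega(z) = \langle\phi,(H_\omega - z)^{-1}\phi\rangle$, the ac density of the spectral measure of $\phi$ is $g(x,\omega) = \lim_{\eps\downarrow0}\frac1\pi\,\mathrm{Im}\,m_\phi^\omega(x+i\eps)$ for Lebesgue-a.e.\ $x$, so
\[
\langle\phi, P_\omega^{\ac}\phi\rangle = \int_\R g(x,\omega)\,dx = \int_\R \limsup_{k\to\infty}\tfrac1\pi\,\mathrm{Im}\,m_\phi^\omega\big(x+\tfrac ik\big)\,dx .
\]
Here $m_\phi^\omega(x+i\eps)$ is measurable in $\omega$ (Lemma~\ref{l:regfcts:weak.msrbl}) and continuous in $x$, so the integrand is jointly measurable and Tonelli makes the integral measurable in $\omega$; polarization gives weak measurability of $\{P_\omega^{\ac}\}$, whence $\{P_\omega^{\sc}\} = \{(I-P_\omega^{\pp}) - P_\omega^{\ac}\}$ is weakly measurable as well.

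With weak measurability and covariance of the $R_\omega^{\bullet,p,q}$ in hand, Lemma~\ref{l:weak.msrbl.proj} shows that $\tr(R_\omega^{\bullet,p,q})$ is $\mu$-a.s.\ equal to a constant $d_{p,q}^\bullet \in \{0,\infty\}$. Let $\Omega_0$ be the intersection, over $\bullet\in\{\ac,\sc,\pp\}$ and over rationals $p<q$, of the full-measure sets on which these traces attain their a.s.\ values; this is a countable intersection, hence of full measure. For $\omega\in\Omega_0$ I would then argue exactly as in Theorem~\ref{t:as.spectrum}: identifying $\sigma_\bullet(H_\omega)$ with the spectrum of the restriction of $H_\omega$ to its $\bullet$-subspace, one has $E\in\sigma_\bullet(H_\omega)$ if and only if $d_{p,q}^\bullet = \infty$ for every rational window $(p,q)\ni E$, a condition independent of $\omega$. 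This identifies an $\omega$-independent set $\Sigma_\bullet$, which lies in $K = [-2-\|f\|_\infty,\,2+\|f\|_\infty]$ and is closed (its complement is a union of rational windows $(p,q)$ with $d_{p,q}^\bullet = 0$), hence compact.

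I expect the main obstacle to be the weak measurability of the ac/sc splitting. Unlike $\chi_{(p,q)}(H_\omega)$, the projections $P_\omega^{\ac}$ and $P_\omega^{\sc}$ are not of the form $h(H_\omega)$ for a fixed Borel $h$, so one must genuinely carry out the Lebesgue-decomposition-via-resolvent argument and verify the attendant joint-measurability and interchange-of-limit steps; the $\pp$/continuous splitting, by contrast, is handled cleanly by the RAGE formula above. The remaining ingredients are purely structural and transfer verbatim from the probability measure case, since the only facts used are the covariance $H_{T\omega}=SH_\omega S^*$, the measurability supplied by Lemma~\ref{l:regfcts:weak.msrbl}, and the $0$-or-$\infty$ trace dichotomy of Lemma~\ref{l:weak.msrbl.proj}.
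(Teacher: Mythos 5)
Your proposal is correct and follows the same skeleton as the paper's proof: both reduce the theorem to weak measurability of the spectral-type projections and then rerun the rational-window, $0$-or-$\infty$ trace argument of Theorem~\ref{t:as.spectrum} via Lemmas~\ref{l:regfcts:weak.msrbl} and~\ref{l:weak.msrbl.proj}. The one genuine difference is the device used for the step you correctly identify as the crux, the ac/singular splitting. The paper measures the \emph{singular} projection directly through the formula $\langle \phi, \mathcal{P}^{\mathrm s}_\omega \psi \rangle = \inf_{\delta>0}\sup\,\langle \phi, \chi_I(H_\omega)\psi\rangle$, the supremum running over rational intervals $I$ with $|I|<\delta$ (citing \cite[Lemma~B.6]{T00}); this needs only the already-established measurability of $\omega \mapsto \langle \phi, \chi_I(H_\omega)\psi\rangle$ together with a countable sup/inf, so no joint measurability or Tonelli argument is required. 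You instead measure the \emph{absolutely continuous} projection via boundary values of $\mathrm{Im}\, m_\phi^\omega(x+i\varepsilon)$, which is sound but obliges you to verify joint measurability in $(x,\omega)$ and to interchange integration and limits. Your treatment of the point/continuous splitting by RAGE matches the paper's (it uses the time-averaged formula \cite[Equation~(5.20)]{T00} for $\mathcal{P}^{\mathrm c}_\omega$, you use the complementary one for $\mathcal{P}^{\pp}_\omega$), and both routes then recover $\mathcal{P}^{\ac}_\omega$, $\mathcal{P}^{\sc}_\omega$, $\mathcal{P}^{\pp}_\omega$ by linear combinations. The paper's choice is simply the more economical one, staying entirely within the Borel functional calculus already covered by Lemma~\ref{l:regfcts:weak.msrbl}.
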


\begin{proof}
Let $ \mathcal{P}^{\ac}_\omega, \mathcal{P}^{\sc}_\omega $, and $ \mathcal{P}^{\pp}_\omega $ denote projection onto the absolutely continuous, singular continuous, and pure point subspaces corresponding to $H_\omega$, respectively.  By following the argument in Theorem \ref{t:as.spectrum}, it clearly suffices to prove weak measurability of these three families of projections.  If $\mathcal{P}^{\mathrm c}_\omega$ denotes projection onto the continuous subspace of $H_\omega$, we have
\begin{equation} \label{eq:contproj}
\langle \phi, \mathcal{P}^{\mathrm c}_\omega \psi \rangle
=
\lim_{N \to \infty} \lim_{T \to \infty} \frac{1}{T} \int_0^T \! \left\langle \phi, e^{itH_\omega}(1-\chi_{[-N,N]}) e^{-itH_\omega} \psi \right\rangle,
\end{equation}
by \cite[Equation~(5.20)]{T00}.  If $\mathcal{P}^{\mathrm s}_\omega$ denotes projection onto the singular subspace of $H_\omega$, then
\begin{equation} \label{eq:singproj}
\langle \phi, \mathcal{P}^{\mathrm s}_\omega \psi \rangle
=
\inf_{\delta > 0} \sup_{\mathrel{\substack{I \in \mathcal{I} \\ |I| < \delta}}} \langle \phi, \chi_I(H_\omega) \psi \rangle ,
\end{equation}
where $\mathcal I$ denotes the collection of intervals in $\R$ with rational endpoints. This follows from \cite[Lemma~B.6]{T00}.
Since $ \mathcal{P}^{\mathrm c}_\omega =  \mathcal{P}^{\ac}_\omega + \mathcal{P}^{\sc}_\omega$ and $ \mathcal{P}^{\mathrm s}_\omega = \mathcal{P}^{\sc}_\omega + \mathcal{P}^{\pp}_\omega $, we are done.
\end{proof}

\section{The Density of States}\label{sec.dos}

In this section, we explore possible notions of the density of states for an ergodic family $(H_\omega)_{\omega \in \Omega}$.

\subsection{Ergodic Averages of Spectral Measures}
When the underlying measure $\mu$ is a probability measure, one can view the density of states as the $\mu$-average of the $\delta_0$ spectral measures of the family $H_\omega$, i.e.
\begin{equation} \label{eq:prob.ids}
\int_{\R} \! g(E) \, dk(E)
=
\int_{\Omega} \! \langle \delta_0, g(H_\omega) \delta_0 \rangle\, d\mu(\omega).
\end{equation}
By Cauchy-Schwarz, the integrand on the right hand side is bounded and hence is $L^1$ with respect to $\mu$.  In the case when $\mu$ is an infinite measure and $\Omega$ is $\sigma$-finite, we clearly need to treat convergence issues with more care.  One way to work around this is to exhaust $\Omega$ by subsets of finite $\mu$-measure and then attempt to understand the natural restrictions of \eqref{eq:prob.ids} to these subsets.  More precisely, let $\mathcal{F}$ denote the collection of measurable subsets $F \subseteq \Omega$ having finite $\mu$-measure.  For each $F \in \mathcal{F}$, define a probability measure $d k^{F}$ by
\begin{equation} \label{eq:ids.def}
\int \! g(E) \, d k^{F}(E)
=
\frac{1}{\mu(F)} \int_{F} \! \langle \delta_0 , g(H_{\omega}) \delta_0\rangle \, d \mu(\omega)
\end{equation}
for each continuous function $g$ having compact support.  Obviously, $d k^{F}$ is absolutely continuous with respect to $d k^{F'}$ whenever $F \subseteq F'$, since
\begin{equation}
k^{F'}(B)
=
\frac{\mu(F)}{\mu(F')} k^{F}(B) + \frac{1}{\mu(F')} \int_{F' \setminus F } \! \langle \delta_0 , \chi_B (H_{\omega}) \delta_0\rangle \, d \mu(\omega).
\end{equation}

\begin{theorem}
Let $\Sigma$ denote the almost sure spectrum of the operators $H_{\omega}$ from Theorem \ref{t:as.spectrum}. One then has
\begin{equation} \label{eq:avronsimon.dos}
\overline{\bigcup_{F\in \mathcal{F}} \supp (d k^F)}
=
\Sigma.
\end{equation}
\end{theorem}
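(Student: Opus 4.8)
The plan is to establish the two inclusions separately, which is natural because both sides of \eqref{eq:avronsimon.dos} are closed (the right side by compactness from Theorem~\ref{t:as.spectrum}, the left by construction).

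I would dispatch the inclusion $\overline{\bigcup_{F}\supp(dk^F)} \subseteq \Sigma$ first, as it is the routine direction and uses only Theorem~\ref{t:as.spectrum}. Since $\Sigma$ is closed, it suffices to show $\supp(dk^F)\subseteq\Sigma$ for each fixed $F\in\mathcal F$. Take any continuous, compactly supported $g$ with $\supp(g)\cap\Sigma=\emptyset$. For $\mu$-a.e.\ $\omega$ one has $\sigma(H_\omega)=\Sigma$, and since the spectral measure of $\delta_0$ with respect to $H_\omega$ is carried by $\sigma(H_\omega)$, the integrand $\langle \delta_0, g(H_\omega)\delta_0\rangle$ vanishes for $\mu$-a.e.\ $\omega$. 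Hence $\int g\,dk^F=0$ by \eqref{eq:ids.def}, so $\supp(dk^F)\subseteq\Sigma$.

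For the reverse inclusion $\Sigma\subseteq\overline{\bigcup_F\supp(dk^F)}$, fix $E\in\Sigma$ and $\eps>0$; it is enough to produce some $F\in\mathcal F$ with $dk^F\big((E-\eps,E+\eps)\big)>0$, for then $(E-\eps,E+\eps)$ meets $\supp(dk^F)$, and letting $\eps\downarrow 0$ places $E$ in the closure. I would consider the family $P_\omega:=\chi_{(E-\eps,E+\eps)}(H_\omega)$, which is weakly measurable by Lemma~\ref{l:regfcts:weak.msrbl} and satisfies the covariance relation $P_{T\omega}=SP_\omega S^*$ (from $H_{T\omega}=SH_\omega S^*$ and the functional calculus). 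Because $E\in\Sigma=\sigma(H_\omega)$ for $\mu$-a.e.\ $\omega$ and $(E-\eps,E+\eps)$ is an open interval meeting the spectrum, $P_\omega\neq 0$, so $\tr(P_\omega)>0$ a.e.; Lemma~\ref{l:weak.msrbl.proj} then forces $\tr(P_\omega)=\infty$ for $\mu$-a.e.\ $\omega$. At this point I would reuse the mechanism from the proof of Lemma~\ref{l:weak.msrbl.proj}: setting $\varphi(\omega):=\langle\delta_0,P_\omega\delta_0\rangle\geq 0$, the covariance yields $\tr(P_\omega)=\sum_{n\in\Z}\varphi(T^n\omega)$, so $\varphi$ cannot vanish $\mu$-a.e.\ (else the trace would be $0$ on a full-measure set). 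Thus there exist $\delta>0$ and $\Omega_1$ with $\mu(\Omega_1)>0$ on which $\varphi\geq\delta$, and by $\sigma$-finiteness I may choose $F\subseteq\Omega_1$ with $0<\mu(F)<\infty$, whence
\[
dk^F\big((E-\eps,E+\eps)\big)=\frac{1}{\mu(F)}\int_F\varphi\,d\mu\geq\delta>0,
\]
as required.

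The main obstacle is this reverse inclusion, and more precisely the passage from ``$E$ lies in the almost-sure spectrum'' to ``$\delta_0$ carries positive spectral weight in $(E-\eps,E+\eps)$ on a positive-measure set of $\omega$.'' One cannot argue $\omega$-by-$\omega$, since for an individual operator $H_\omega$ the vector $\delta_0$ may well be orthogonal to the spectral subspace for $(E-\eps,E+\eps)$ even though that subspace is nontrivial. The resolution is to exploit that the $\mu$-almost-sure infinitude of $\tr(P_\omega)$ provided by Lemma~\ref{l:weak.msrbl.proj} is incompatible with $\varphi$ vanishing almost everywhere; this is exactly the non-vanishing input that can then be fed into \eqref{eq:ids.def} to produce a witnessing set $F$.
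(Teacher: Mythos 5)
Your proof is correct, and the first inclusion is essentially identical to the paper's. For the harder inclusion $\Sigma\subseteq\overline{\bigcup_F\supp(dk^F)}$ you take a genuinely different route. The paper argues by contraposition: given $E_0$ outside the closure of the union, it picks a continuous bump $g$ vanishing on that closed set with $g(E_0)=1$, deduces from \eqref{eq:ids.def} and a countable exhaustion that $\langle\delta_0,g(H_\omega)\delta_0\rangle=0$ a.e., transfers this to $\delta_1$ via the covariance and measure preservation, and then invokes cyclicity of the pair $\{\delta_0,\delta_1\}$ for the discrete Schr\"odinger operator to conclude $g(H_\omega)=0$ a.e., hence $E_0\notin\Sigma$. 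You instead argue directly: for $E\in\Sigma$ you apply Lemma~\ref{l:weak.msrbl.proj} to the covariant family $\chi_{(E-\eps,E+\eps)}(H_\omega)$ and extract from the trace identity $\tr(P_\omega)=\sum_n\langle\delta_0,P_{T^n\omega}\delta_0\rangle$ a positive-measure set on which $\langle\delta_0,P_\omega\delta_0\rangle$ is bounded below, which yields a witnessing $F$. Your argument avoids the cyclicity input entirely and is slightly more informative (it produces, for each $E\in\Sigma$ and $\eps>0$, an explicit $F$ with $k^F((E-\eps,E+\eps))>0$), at the modest cost of needing the extension of the defining formula \eqref{eq:ids.def} from continuous $g$ to characteristic functions of intervals --- an extension the paper itself already uses when writing $k^{F'}(B)$ --- and of invoking the full $0$-or-$\infty$ dichotomy where mere a.e.\ positivity of the trace would suffice. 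Both proofs are sound.
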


\begin{proof}

For notational ease, let $S$ denote the left hand side of \eqref{eq:avronsimon.dos}.  To prove the inclusion ``$\subseteq$,'' suppose that $E_0 \in \R \setminus \Sigma$.  We may then choose a continuous, nonnegative function $g$ for which $g(E_0) = 1$ and $g|_{\Sigma} \equiv 0$.  One then has $g(H_{\omega}) = 0$ for $\mu$ almost every $\omega \in \Omega$ by the spectral theorem.  From this, it follows that
$$
\int \! g(E) \, d k^F(E)
=
\frac{1}{\mu(F)} \int_{F} \! \langle \delta_0, g(H_{\omega})  \delta_0 \rangle \, d \mu(\omega) = 0
$$
for all $F \in \mathcal{F}$.  Thus, for all $F$, one has $E_0 \notin \supp (d k^F)$, so
$$
\bigcup_{F\in \mathcal{F}} \supp (dk^F)
\subseteq
\Sigma,
$$
which then implies $S \subseteq \Sigma$, since $\Sigma$ is closed. Conversely, given $E_0 \in \R \setminus S$, pick a continuous nonnegative function $g$ with $g(E_0) = 1$ such that $g$ vanishes on $S$.  For each $F \in \mathcal F$, we get
\begin{equation} \label{eq:spatial.dos.int}
0
=
\int \! g(E) \,  d k^F(E)
=
\frac{1}{\mu(F)} \int_{F} \! \langle \delta_0, g(H_{\omega}) \delta_0 \rangle \,  d \mu(\omega).
\end{equation}
By $\sigma$-finiteness, $\Omega$ enjoys a countable exhaustion $F_1 \subseteq F_2 \subseteq \cdots$ by members of $\mathcal{F}$.  Using \eqref{eq:spatial.dos.int}, for each $n$, $ \langle \delta_0, g(H_\omega) \delta_0 \rangle $ vanishes for $\mu$-almost every $\omega \in F_n$.  Thus, the same inner product vanishes for $\mu$-almost every $\omega \in \Omega$.  Lastly, note that
$$
\int_F \! \langle \delta_0, g(H_\omega) \delta_0 \rangle \, d\mu(\omega)
=
\int_F \! \langle S\delta_1, g(H_\omega) S\delta_1 \rangle \, d\mu(\omega)
=
\int_{T^{-1}(F)} \! \langle \delta_1, g(H_\omega) \delta_1 \rangle \, d\mu(\omega),
$$
so, by the same argument as before, $\langle \delta_1, g(H_\omega) \delta_1 \rangle =0$ for $\mu$-almost every $\omega \in \Omega$.  Since $\{\delta_0,\delta_1\}$ is a cyclic pair for $H_\omega$, $g(H_\omega) = 0$ for $\mu$ almost every $\omega \in \Omega$, which implies $E_0 \notin \Sigma$.
\end{proof}

Consider an exhaustion $F_1 \subseteq F_2 \subseteq \cdots$ of $\Omega$ by sets of finite measure, and abbreviate $dk^l =  dk^{F_l}$.  By general measure theory, there exists some weakly convergent subsequence $ dk^{l_j}$.  One may hope that the sequence $dk^l$ itself might be weakly convergent, but this is not the case.

\begin{theorem}
In general, the measures $ dk^l$ need not have a weak limit $ dk$ as $l\to \infty$.
\end{theorem}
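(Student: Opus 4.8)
The plan is to avoid controlling the full measures and instead track a single moment, chosen so that it is literally a $\mu$-average of the sampling function $f$. Taking $g(E)=E$ in the defining relation \eqref{eq:ids.def}, and using $\langle\delta_0,\Delta\delta_0\rangle=0$ together with $\langle\delta_0,V_\omega\delta_0\rangle=f(\omega)$, one obtains
\[
\int_{\R} E\,dk^{F}(E)=\frac{1}{\mu(F)}\int_F \langle\delta_0,H_\omega\delta_0\rangle\,d\mu(\omega)=\frac{1}{\mu(F)}\int_F f\,d\mu
\]
for every $F\in\mathcal{F}$. Every $dk^l$ is a probability measure supported in the fixed compact set $K=[-2-\|f\|_\infty,\,2+\|f\|_\infty]$, so a weak limit $dk$ would also be supported in $K$, and testing against the continuous function $E\mapsto E$ (or any $g\in C_b(\R)$ agreeing with it on $K$) would force the numbers $\int E\,dk^l(E)$ to converge. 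Hence it suffices to exhibit a system and an exhaustion for which the averages $\mu(F_l)^{-1}\int_{F_l}f\,d\mu$ fail to converge.

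To that end I would fix any conservative, invertible, ergodic, measure-preserving $T$ on an infinite $\sigma$-finite space $(\Omega,\mathcal{B},\mu)$ -- for instance the suspension over an irrational rotation constructed in Section~\ref{sec:ex} -- and take $f=\chi_A$ for a measurable set $A$ with $\mu(A)=\mu(\Omega\setminus A)=\infty$; such an $A$ exists by $\sigma$-finiteness. Then $f$ is bounded and measurable, so $(H_\omega)$ is a legitimate ergodic family, and the identity above becomes $\int E\,dk^{F}(E)=\mu(F)^{-1}\mu(F\cap A)$.

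I would then construct the exhaustion $F_1\subseteq F_2\subseteq\cdots$ inductively, interleaving a fixed background exhaustion $E_1\subseteq E_2\subseteq\cdots$ (to guarantee $\bigcup_l F_l=\Omega$) with large ``loading'' sets. Given $F_l$, put $B_{l+1}=F_l\cup E_{l+1}$, of finite measure $b$. If $l+1$ is even, adjoin a subset $G\subseteq A\setminus B_{l+1}$ of measure $M$; since $\mu(A)=\infty$ this is possible, and it makes $\int E\,dk^{F_{l+1}}(E)\ge M/(b+M)$, which can be pushed arbitrarily close to $1$ by choosing $M$ large. If $l+1$ is odd, adjoin instead $G\subseteq(\Omega\setminus A)\setminus B_{l+1}$ of measure $M$, which bounds the moment by $b/(b+M)$, arbitrarily close to $0$. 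Arranging these bounds to improve along the induction yields $\liminf_l\int E\,dk^l(E)=0$ and $\limsup_l\int E\,dk^l(E)=1$, so the first moments do not converge and no weak limit can exist.

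The main obstacle is structural rather than analytic: the monotonicity constraint $F_l\subseteq F_{l+1}$ forbids discarding previously accumulated mass, so the required oscillation must be produced purely by adjoining dominating chunks of $A$ or of $\Omega\setminus A$ -- which is precisely why both level sets of $f$ must carry infinite measure. The only technical point is the reduction to the single continuous observable $E\mapsto E$, which is legitimate because all the $dk^l$ are supported on the common compact set $K$; everything else is bookkeeping. This construction is in the same spirit as the oscillation of Birkhoff averages established in Theorem~\ref{thm:0}, realizing at the level of the spatially averaged density of states the non-convergence phenomenon that motivates Sections~\ref{SBirkhoff} and~\ref{sec:ex}.
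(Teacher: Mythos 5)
Your proposal is correct, and it rests on the same reduction as the paper's proof: by \eqref{eq:ids.def} with $g(E)=E$ (extended off the common compact support $K$ however one likes), the first moment of $dk^{F}$ equals $\mu(F)^{-1}\int_F f\,d\mu$, so it suffices to make these spatial averages of $f$ oscillate. Where you diverge is in which ingredient is designed and which is taken as given. The paper fixes a concrete exhaustion ($\Omega=\R$ with Lebesgue measure, $F_l=[-l,l]$, and, in the remark following the proof, any exhaustion with $\mu(F_n\setminus F_{n-1})>0$) and then engineers the sampling function $f$ so that the averages $\mu(F_l)^{-1}\int_{F_l}f\,d\mu$ reproduce a prescribed non-convergent sequence of weighted Ces\`aro means; you instead fix a very simple sampling function, $f=\chi_A$ with $\mu(A)=\mu(\Omega\setminus A)=\infty$, and engineer the exhaustion, alternately adjoining large chunks of $A$ and of $\Omega\setminus A$ to drive the averages toward $1$ and $0$. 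Both establish the theorem as stated; the paper's version has the virtue of exhibiting failure along a prescribed, ``natural'' exhaustion (arguably the more relevant phenomenon, since in applications the exhaustion is dictated by the geometry of $\Omega$), while yours shows that even an innocuous $\{0,1\}$-valued potential admits a bad exhaustion and transfers verbatim to any $\sigma$-finite infinite space without constructing a tailored $f$. One small point to tidy: a subset of $A\setminus B_{l+1}$ of measure exactly $M$ need not exist if $\mu$ has atoms, but a subset of measure at least $M$ always does ($\sigma$-finiteness together with $\mu(A\setminus B_{l+1})=\infty$), and since $x\mapsto x/(b+x)$ is increasing this still yields your lower bound $M/(b+M)$; the symmetric remark applies in the odd step.
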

\begin{proof}
To see this, it suffices to construct an example for which the first moments
$$
\int \! E \, dk^l(E)
$$
fail to converge as $l \to \infty$.

Take $\Omega = \R$ endowed with Lebesgue measure and an invertible, measure-preserving, ergodic, conservative transformation $T$, and put $F_l = [-l,l] $ for $l \in \Z_+$. Next, choose a sequence $(a_n) \in \{0,1\}^{\Z_+}$ such that the Cesar\`{o} averages
$$
s_l = \frac{a_1 + \cdots + a_l}{l}
$$
fail to converge as $l \to \infty$.  It is not hard to see that we may construct a bounded, continuous function $f$ so that
$$
\int_{-l-1}^{-l} \! f(x) \, dx
=
\int_l^{l+1} \! f(x) \, dx = a_l.
$$
With this setup, define the family of Schr\"odinger operators $(H_{x})_{x\in\R}$ as usual, and observe that
\begin{align*}
\int \! E \, dk^l(E)
& =
\frac{1}{\mu(F_l)} \int_{F_l} \! \langle \delta_0, H_{\omega}\delta_0 \rangle \,  d\mu(\omega) \\
& =
\frac{1}{2l} \int_{[-l,l]} \! f(x) \,  dx \\
& =
s_l,
\end{align*}
which fails to converge by construction. In particular, $dk^l$ is not weakly convergent.
\end{proof}

The example above generalizes readily.  Assume given $(\Omega, \mathcal B, \mu)$ which is $\sigma$-finite with $\mu(\Omega) = \infty$, and an exhaustion $F_1 \subseteq \cdots$ of $\Omega$ by sets of finite measure such that $\mu(F_n \backslash F_{n-1}) > 0 $ for $n>1$.  We can then choose a sequence $a_n$ so that the weighted Cesar\`{o} averages $s_l = \frac{a_1 + \cdots + a_l}{\mu(F_l)}$ fail to converge. With the convention $F_0 = \emptyset$, the choice
$$
f
=
\sum_{j=1}^\infty \frac{a_j}{\mu(F_j \setminus F_{j-1})} \chi_{F_j \setminus F_{j-1}}
$$
produces an example for which the spatial density of states cutoffs do not converge weakly.
\newline

\subsection{Thermodynamic Limit of Finite Truncations} In the finite-measure case, we can also view the density of states as a weak$^*$ limit of  averages of spectral measures or the weak$^*$ limit of uniform measures placed on the spectra of finite cutoffs.  More precisely, given $N \in \N$, let $P_{N,+}:\ell^2(\Z) \to \ell^2(\{0,\ldots, N-1\})$ and $P_{N,-}:\ell^2(\Z) \to \ell^2(\{-N,\ldots,-1\})$ denote the canonical projections. Then, for $\omega \in \Omega$ and $N \in \Z_+$, define probability measures $dk_{\omega,N}^\pm$ and $d\tilde{k}_{\omega,N}^\pm$ on $\R$ via
 \begin{align*}
 \int \! g(E) \, dk_{\omega,N}^\pm (E)
 & =
 \frac{1}{N} \tr\left(P_{N,\pm} g(H_\omega) P_{N,\pm}^*\right) \\
  \int \! g(E) \, d\tilde k_{\omega,N}^\pm (E)
 & =
 \frac{1}{N} \tr\left( g\left(P_{N,\pm} H_\omega P_{N,\pm}^* \right) \right)
 \end{align*}
for each continuous function $g$. For later use, we point out that taking $g(E) \equiv E$, we have
\begin{align} \label{eq:dosRight}
 \int E \, dk_{\omega,N}^+ (E)
=
 \int E \, d\tilde k_{\omega,N}^+ (E)
 &=
 \frac{1}{N} \sum_{n=0}^{N-1} f(T^n\omega) \\[1mm]
 \nonumber & = A_N(\omega,f,T), \\[2mm]
 \label{eq:dosLeft}
  \int E \, dk_{\omega,N}^- (E)
=
 \int E \, d\tilde k_{\omega,N}^- (E)
 & =
 \frac{1}{N} \sum_{n=-N}^{-1} f(T^n\omega) \\[1mm]
 \nonumber
 &  = A_N(\omega,f \circ T^{-1},T^{-1}).
\end{align}

\begin{theorem} \label{t:dos.I}
There exists $\Omega_* \subseteq \Omega$ of full $\mu$-measure such that, for every continuous function $g$, there exist constants $\overline{I}^\pm(g) $ and $\underline{I}^\pm(g)$ such that
\begin{align}
\label{eq:dos.I1}
\underline{I}^\pm(g)
& =
\liminf_{N \to \infty} \int \! g \,  dk_{\omega,N}^\pm \\
\label{eq:dos.I1a}
 & =
\liminf_{N\to\infty} \int \! g \, d\tilde k_{\omega,N}^\pm \\
\label{eq:dos.I2}
\overline{I}^\pm(g)
& =
\limsup_{N \to \infty} \int \! g \,  dk_{\omega,N}^\pm \\
\label{eq:dos.I2a}
& =
\limsup_{N\to\infty} \int \! g \, d\tilde k_{\omega,N}^\pm.
\end{align}
for all $\omega \in \Omega_*$.
\end{theorem}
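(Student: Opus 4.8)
The plan is to handle the ``true'' truncations $dk^\pm_{\omega,N}$ and the ``matrix'' truncations $d\tilde k^\pm_{\omega,N}$ separately and then compare them. Concretely, I would (a) show that for each fixed continuous $g$ the functions $\omega\mapsto\liminf_N\int g\,dk^\pm_{\omega,N}$ and $\omega\mapsto\limsup_N\int g\,dk^\pm_{\omega,N}$ are measurable and exactly $T$-invariant, hence $\mu$-almost surely constant; (b) upgrade ``for each $g$'' to ``simultaneously for all $g$, on one full-measure set'' by a density argument; and (c) show that $d\tilde k^\pm_{\omega,N}$ has the same $\liminf$ and $\limsup$ as $dk^\pm_{\omega,N}$, deterministically in $\omega$. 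Throughout, set $K=[-2-\|f\|_\infty,\,2+\|f\|_\infty]$, which contains $\sigma(H_\omega)$ for a.e.\ $\omega$.

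For (a), fix a continuous $g$. For each $N$ the quantity $\int g\,dk^+_{\omega,N}=\frac1N\sum_{n=0}^{N-1}\langle\delta_n,g(H_\omega)\delta_n\rangle$ is measurable in $\omega$ by Lemma~\ref{l:regfcts:weak.msrbl}, so its $\liminf$ and $\limsup$ over $N$ are measurable. Using the covariance $H_{T\omega}=SH_\omega S^*$ together with $S^*\delta_n=\delta_{n+1}$, a direct computation gives
\[
\int g\,dk^+_{T\omega,N}-\int g\,dk^+_{\omega,N}=\frac1N\big(\langle\delta_N,g(H_\omega)\delta_N\rangle-\langle\delta_0,g(H_\omega)\delta_0\rangle\big),
\]
whose absolute value is at most $2\|g\|_{\infty,K}/N\to0$; the same holds for the $-$ truncation, with boundary terms now at indices $0$ and $-N$. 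Hence $\liminf_N$ and $\limsup_N$ of these averages are genuine $T$-invariant functions, and almost-sure constancy of $T$-invariant functions (property (i) of the introduction, valid in the infinite-measure setting) forces them to equal constants $\underline I^\pm(g)$ and $\overline I^\pm(g)$ off a $\mu$-null set.

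For (b), note that each $dk^\pm_{\omega,N}$ is a probability measure supported in $K$, so $g\mapsto\int g\,dk^\pm_{\omega,N}$ is $1$-Lipschitz in $\|\cdot\|_{\infty,K}$, uniformly in $N$ and $\omega$; consequently $g\mapsto\liminf_N\int g\,dk^\pm_{\omega,N}$ and its $\limsup$ analog are $1$-Lipschitz as well, via $|\liminf a_N-\liminf b_N|\le\sup_N|a_N-b_N|$. Fixing a countable $\|\cdot\|_{\infty,K}$-dense set $\mathcal D\subset C(K)$ and intersecting the full-measure sets from (a) over all $g\in\mathcal D$ produces a single set $\Omega_*$ of full $\mu$-measure; the Lipschitz estimate then propagates constancy from $\mathcal D$ to all of $C(K)$ on $\Omega_*$, and also shows $\underline I^\pm,\overline I^\pm$ extend continuously to $C(K)$.

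For (c), the comparison of $dk^\pm_{\omega,N}$ with $d\tilde k^\pm_{\omega,N}$ is deterministic and uses only that $H_\omega$ is tridiagonal with entries bounded by $1+\|f\|_\infty$. For a monomial $g(E)=E^m$, both $\tr\!\big(P_{N,+}H_\omega^m P_{N,+}^*\big)$ and $\tr\!\big((P_{N,+}H_\omega P_{N,+}^*)^m\big)$ are sums over length-$m$ lattice paths returning to $\{0,\dots,N-1\}$, and they differ only on paths reaching within distance $m$ of the window boundary; there are $O(m)$ such starting sites, each contributing $O((1+\|f\|_\infty)^m)$, so the difference is bounded by a constant $C(m)$ independent of $N$. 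Dividing by $N$ gives $\int E^m\,dk^\pm_{\omega,N}-\int E^m\,d\tilde k^\pm_{\omega,N}\to0$; since a compression cannot enlarge the spectral interval, $\sigma(P_{N,\pm}H_\omega P_{N,\pm}^*)\subseteq K$ and $d\tilde k^\pm_{\omega,N}$ is again a probability measure on $K$, so Weierstrass approximation promotes the convergence to every continuous $g$, whence the $\liminf$ and $\limsup$ agree with those of $dk^\pm_{\omega,N}$ for every $\omega$. Combining (a)--(c) on $\Omega_*$ yields all four displayed equalities. I expect (c) to be the main obstacle: it is the only step exploiting the detailed banded structure of $H_\omega$ rather than soft ergodic and measure-theoretic facts, and the bookkeeping of exactly which boundary paths survive the cancellation requires care, though the fixed tridiagonal bandwidth keeps the affected set of size $O(m)$.
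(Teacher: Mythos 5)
Your proposal is correct and follows essentially the same route as the paper's proof: establish $T$-invariance of the $\liminf$/$\limsup$ of $\int g\,dk^\pm_{\omega,N}$ via the covariance $H_{T\omega}=SH_\omega S^*$ (your telescoped boundary-term identity is just a rearrangement of the paper's recursion relating $dk^+_{\omega,N+1}$ to $dk^+_{T\omega,N}$), invoke almost-sure constancy of invariant functions, pass to all of $C(K)$ through a countable dense family, and compare $dk^\pm_{\omega,N}$ with $d\tilde k^\pm_{\omega,N}$ by an $O(1/N)$ estimate on monomials. The only substantive difference is that you actually carry out the path-counting argument behind the $O(1/N)$ bound (and note that compressions keep $d\tilde k^\pm_{\omega,N}$ supported in $K$), which the paper leaves as ``an explicit calculation.''
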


\begin{proof}
First, notice that
\begin{align*}
\int \! g(E) \,  dk_{\omega,N+1}^+(E)  & = \frac{1}{N+1} \sum_{j=0}^{N} \langle \delta_j, g(H_{\omega}) \delta_j \rangle  \\
                              & = \frac{N}{N+1} \frac{1}{N}   \sum_{j=1}^{N} \langle \delta_j, g(H_{\omega}) \delta_j \rangle + \frac{1}{N+1} \langle \delta_0, g(H_{\omega}) \delta_0 \rangle \\
                              & = \frac{N}{N+1} \frac{1}{N}   \sum_{j=0}^{N-1} \langle \delta_j, g(H_{T\omega}) \delta_j \rangle + \frac{1}{N+1} \langle \delta_0, g(H_{\omega}) \delta_0 \rangle \\
                              & = \frac{N}{N+1} \int \! g(E) \, dk^+_{T\omega,N}(E) +\frac{1}{N+1} \langle \delta_0, g(H_{\omega}) \delta_0 \rangle.
\end{align*}
Taking lim inf and lim sup of both sides proves that $\liminf \int g \, dk_{\omega,N}^+$ and $\limsup \int g \, dk_{\omega,N}^+$ are $T$-invariant functions of $\omega$. A similar argument shows that this holds with $-$ replacing $+$. Thus, we find a full-measure set $\Omega_g$ and constants $\underline{I}^\pm(g)$, $\overline{I}^\pm(g)$ so that  \eqref{eq:dos.I1} and \eqref{eq:dos.I2} hold true for $\omega \in \Omega_g$.

Next, let $\mathcal P$ denote the collection of all polynomials having rational coefficients, and $\Omega_* = \bigcap_{p \in \mathcal P} \Omega_p$. Uniformly approximating a continuous function $g$ by $p \in \mathcal P$ on $K:=[-2-\|f\|_\infty,2+\|f\|_\infty]$, we observe \eqref{eq:dos.I1} and \eqref{eq:dos.I2} hold for all continuous $g$ and all $\omega \in \Omega_*$.

Next, consider $p \in \mathcal P$ and $\omega \in \Omega_*$. By an explicit calculation, one has
\[
\left| \int \! p \, dk_{\omega,N}^\pm - \int \! p\, d\tilde k_{\omega,N}^\pm \right|
=
O(1/N),
\]
where the implicit constant depends on $p$ but not on $N$. Thus, \eqref{eq:dos.I1a} and \eqref{eq:dos.I2a} hold for $p \in \mathcal P$. Passing to general $g$ via uniform approximation concludes the proof.
\end{proof}

Theorem~\ref{thm:0} shows us that Theorem \ref{t:dos.I} is optimal in the sense that we cannot expect the ``upper'' and ``lower'' density of states limits to agree.  To see this, assume given $(\Omega, \mathcal B, \mu, T)$ ergodic, conservative, invertible, and $\sigma$-finite with $\mu(\Omega) = \infty$, and choose a measurable function $f : \Omega \to \{0,1\}$ as in Theorem~\ref{thm:0}.  Define $V_{\omega}(n) = f(T^n \omega)$ and $H_{\omega} = \Delta + V_{\omega}$ as usual.  Then, with $g(E) \equiv E$, Theorem~\ref{thm:0} and \eqref{eq:dosRight} imply
\begin{align*}
\underline I^+ (g) = 0 & \neq 1 = \overline I^+(g).
\end{align*}

Additionally, choosing $f$ as in Section~\ref{sec:ex} \eqref{eq:an1} and \eqref{eq:an0} imply
\[
\overline{I}^-(g) = \underline{I}^-(g) = 0 \neq 1 =\overline{I}^+(g),
\]
so the behavior on the left and right half-lines may not be the same.

\section{The Lyapunov Exponents}\label{sec.le}

As before, let $(\Omega,\mathcal B,\mu)$ be a measure space, and $T$ a non-singular invertible ergodic map. Throughout this section, we will also assume that $T$ is conservative.

Assume that $f$ and $H_\omega$ are defined as above. Let us define the one-step transfer matrix
\[
A(E,\omega) =  \begin{pmatrix} E - f(\omega) & -1 \\ 1 & 0 \end{pmatrix}
\]
and the $n$-step transfer matrix
\[
A(E,n,\omega) = \begin{cases} A(E,T^{n-1}\omega) \dotsm  A(E, \omega) & n  > 0, \\
I & n =0,\\
A(E,T^{-\lvert n\rvert}\omega)^{-1} \dotsm A(E,T^{-1}\omega)^{-1} & n < 0.
\end{cases}
\]
In this section we explore the growth of norms of transfer matrices and its relation to the spectral properties of $H_\omega$.

\begin{theorem}
For any $E\in \mathbb{C}$, there exist finite numbers $\overline L^\pm(E)$, $\underline L^\pm(E)$, called upper and lower Lyapunov exponents, such that for $\mu$-a.e.\ $\omega$,
\begin{align}
 \limsup_{n\to\pm \infty} \frac 1{\lvert n\rvert} \log \lVert A(E,n,\omega) \rVert  & = \overline L^\pm(E) \\
 \liminf_{n\to\pm \infty}  \frac 1{\lvert n\rvert} \log \lVert A(E,n,\omega) \rVert & = \underline L^\pm(E).
\end{align}
\end{theorem}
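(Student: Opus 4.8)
The plan is to show that each of the four quantities
\[
\limsup_{n\to+\infty} \tfrac1n \log \lVert A(E,n,\omega)\rVert,\quad
\liminf_{n\to+\infty} \tfrac1n \log \lVert A(E,n,\omega)\rVert,
\]
and their backward analogues, is a $T$-invariant (measurable) function of $\omega$, and then invoke ergodicity to conclude that each equals an almost sure constant. First I would record the multiplicative cocycle identity $A(E,n+1,\omega) = A(E,T^n\omega)\,A(E,n,\omega)$, which follows directly from the definition of the $n$-step transfer matrix. The key observation is that the single-step factor $A(E,T^n\omega)$ is a fixed invertible matrix whose norm and inverse norm are bounded above and below by constants depending only on $E$ and $\lVert f\rVert_\infty$ (since $\lvert\det A\rvert = 1$ and the entries are bounded). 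Therefore, comparing $A(E,n,T\omega)$ with $A(E,n+1,\omega)$, we have
\[
A(E,n,T\omega) = A(E,T^{n}\omega)\dotsm A(E,T\omega)
= A(E,n+1,\omega)\,A(E,\omega)^{-1},
\]
so $\lVert A(E,n,T\omega)\rVert$ and $\lVert A(E,n+1,\omega)\rVert$ differ only by the multiplicative constant $\lVert A(E,\omega)^{\pm1}\rVert$.

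From this comparison it follows that
\[
\tfrac1n \log \lVert A(E,n,T\omega)\rVert
- \tfrac1n \log \lVert A(E,n+1,\omega)\rVert \longrightarrow 0
\]
as $n\to\infty$, because the bounded correction contributes $O(1/n)$ to the normalized logarithm. Taking $\limsup$ and $\liminf$ then shows that the forward upper and lower exponents, viewed as functions of $\omega$, satisfy $\Phi(T\omega) = \Phi(\omega)$ almost everywhere. Measurability of these functions follows from the measurability of $\omega \mapsto A(E,n,\omega)$ (a finite product of matrices built from the measurable function $f$ and the measurable maps $T^k$), together with the fact that $\limsup$ and $\liminf$ of countably many measurable functions are measurable. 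Ergodicity of $T$ then gives constants $\overline L^+(E)$ and $\underline L^+(E)$ such that the forward exponents equal these constants for $\mu$-a.e.\ $\omega$. The backward statements are handled identically, replacing $T$ by $T^{-1}$ and using the corresponding cocycle relation for negative $n$; here the invertibility and non-singularity of $T$ guarantee that the same argument applies to the inverse dynamics.

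It remains to argue that the constants are finite. For the upper exponents this is immediate from the submultiplicativity $\lVert A(E,n,\omega)\rVert \le \lVert A(E,\cdot)\rVert_\infty^{\,n}$, which gives a uniform upper bound $\overline L^\pm(E) \le \log\lVert A(E,\cdot)\rVert_\infty < \infty$; the lower exponents are bounded above by the upper ones and are nonnegative because $\lvert\det A(E,n,\omega)\rvert = 1$ forces $\lVert A(E,n,\omega)\rVert \ge 1$, giving $\underline L^\pm(E)\ge 0$. The main subtlety, and the step I expect to require the most care, is the justification that the bounded multiplicative correction truly washes out under the $\frac1n$ normalization uniformly enough to preserve both $\limsup$ and $\liminf$ (not merely one of them); but since the correction factor $\lVert A(E,\omega)^{\pm 1}\rVert$ is a genuinely fixed finite quantity for each fixed $\omega$, its logarithm divided by $n$ tends to zero deterministically, so the $\limsup$ and $\liminf$ of the two normalized sequences coincide. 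This is in contrast to the probability-measure setting, where one would invoke Kingman's subadditive ergodic theorem to obtain actual convergence of $\frac1n\log\lVert A(E,n,\omega)\rVert$; in the infinite-measure case Kingman's theorem is unavailable, which is precisely why we can only assert the almost sure constancy of the upper and lower exponents separately rather than their equality.
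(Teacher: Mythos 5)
Your proof is correct, and it reaches the same endpoint (invariance of the four $\limsup$/$\liminf$ functions followed by ergodicity) via a slightly different mechanism than the paper. The paper uses only submultiplicativity, $f_{n+1}(\omega) \le f_1(\omega) + f_n(T\omega)$ where $f_n(\omega) = \log\lVert A(E,n,\omega)\rVert$, which yields just the one-sided inequality $\underline f^+(\omega) \le \underline f^+(T\omega)$; consequently the sublevel sets $B_\gamma = \{\omega : \underline f^+(\omega) < \gamma\}$ are only sub-invariant ($T^{-1}B_\gamma \subset B_\gamma$), and the paper must invoke conservativity of $T$, in addition to ergodicity, to conclude that such sets have trivial measure. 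You instead exploit the exact cocycle identity $A(E,n,T\omega) = A(E,n+1,\omega)\,A(E,\omega)^{-1}$ together with the finiteness of $\lVert A(E,\omega)^{\pm 1}\rVert$ for each fixed $\omega$ to obtain genuine two-sided invariance, $\overline f^\pm(T\omega) = \overline f^\pm(\omega)$ and $\underline f^\pm(T\omega) = \underline f^\pm(\omega)$, after which ergodicity alone (property (i) of the introduction, which survives in the infinite measure setting) suffices. Your route therefore gets by with a marginally weaker hypothesis for this particular theorem --- conservativity is not needed, though the paper assumes it throughout the section anyway --- at the cost of using invertibility of the one-step matrices, which the paper's purely subadditive argument does not require. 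Your finiteness discussion ($0 \le \underline L^\pm(E) \le \overline L^\pm(E) \le \log\sup_\omega \lVert A(E,\omega)\rVert < \infty$, with the lower bound coming from $\lvert \det A(E,n,\omega)\rvert = 1$) and your reduction of the backward case to the forward cocycle of $T^{-1}$ are both sound.
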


\begin{proof}
Denote
\begin{equation}
f_n(\omega) = \log \lVert A(E,n,\omega) \rVert
\end{equation}
and
\begin{align}
\overline f^\pm(\omega) & = \limsup_{n\to \pm\infty} \frac 1{\lvert n\rvert} f_n(\omega) \\
\underline f^\pm(\omega) & = \liminf_{n\to \pm\infty} \frac 1{\lvert n\rvert}  f_n(\omega).
\end{align}
We prove that $\overline f^\pm(\omega)$ and $\underline f^\pm(\omega)$ are $\mu$-a.e.\ constant.

Sub-multiplicativity of the matrix norm implies that for $m, n \ge 0$,
\[
f_{m+n}(\omega) \le f_m(\omega) + f_n (T^m \omega).
\]
In particular, with $m=1$, this implies
\[
\frac{f_{n+1}(\omega)}n \le \frac{f_1(\omega)}n + \frac{f_n(T\omega)}n
\]
and taking the $\liminf$ as $n\to +\infty$, we conclude
\[
\underline f^+(\omega) \le \underline f^+(T\omega)
\]
Thus, for any $\gamma\in\mathbb{R}$, the set
\[
B_\gamma = \{ \omega : \underline f^+(\omega) < \gamma \}
\]
obeys $T^{-1} B_\gamma \subset B_\gamma$. Since $T$ is conservative and ergodic, this implies that $\mu(B_\gamma)=0$ or $\mu(B_\gamma^c)=0$. 
Thus, there is a constant $c$ such that $\underline f^+(\omega)=c$ for $\mu$-a.e.\ $\omega$. This constant is precisely $\underline L^+(E)$.

The proof for the other three constants is analogous.
\end{proof}

Obviously, $\underline L^+(E) \le \overline L^+(E)$ and $\underline L^-(E) \le \overline L^-(E)$. But there are also inequalities between Lyapunov exponents at $+\infty$ and those at $-\infty$.

\begin{prop}
Both lower Lyapunov exponents are smaller or equal than both upper Lyapunov exponents, i.e.\
\begin{align*}
\underline L^-(E) \le \overline L^+(E), \qquad \underline L^+(E) \le \overline L^-(E).
\end{align*}
\end{prop}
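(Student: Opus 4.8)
The \textbf{plan} is to reduce everything to a single algebraic identity relating forward and backward transfer matrices, and then to extract the desired inequality from that identity using conservativity. The starting observation is that each one-step matrix $A(E,\omega)$ has determinant $1$, so the whole cocycle takes values in $\SL(2,\C)$; for any $M \in \SL(2,\C)$ the two singular values are reciprocal, whence $\lVert M\rVert = \lVert M^{-1}\rVert$. Combining this with the cocycle relation gives $A(E,n,\omega)^{-1} = A(E,-n,T^n\omega)$ for $n>0$, so
\[
\lVert A(E,n,\omega)\rVert = \lVert A(E,-n,T^n\omega)\rVert .
\]
Writing $\phi_n(\omega) = \tfrac1n\log\lVert A(E,n,\omega)\rVert$ and $\psi_n(\omega) = \tfrac1n\log\lVert A(E,-n,\omega)\rVert$ for $n>0$, this identity reads $\phi_n(\omega) = \psi_n(T^n\omega)$. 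By the preceding theorem, $\overline L^+(E) = \limsup_n \phi_n(\omega) = \limsup_n \psi_n(T^n\omega)$ and $\underline L^-(E) = \liminf_n \psi_n(\omega)$ are (finite) $\mu$-a.e.\ constants, so proving $\underline L^-(E)\le\overline L^+(E)$ amounts to comparing $\liminf_n \psi_n(\omega)$ at $\omega$ with $\limsup_n \psi_n(T^n\omega)$ along the shifted orbit.

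To make this comparison I would fix $\epsilon>0$ and use that $\liminf_m \psi_m(\theta) = \underline L^-(E)$ for a.e.\ $\theta$, so that for a.e.\ $\theta$ there is a finite threshold $M(\theta)$ with $\psi_m(\theta)\ge \underline L^-(E)-\epsilon$ for all $m\ge M(\theta)$. The sets $F_K = \{\theta : M(\theta)\le K\}$ increase to a full-measure set, so continuity from below lets me pick $n_0$ with $\mu(F_{n_0})>0$. Now I invoke \emph{conservativity together with ergodicity}: since $\mu(F_{n_0})>0$, for $\mu$-a.e.\ $\omega$ the forward orbit satisfies $T^n\omega\in F_{n_0}$ for infinitely many $n$ (the set of such $\omega$ is $T$-invariant and, by the Halmos recurrence theorem, contains a.e.\ point of $F_{n_0}$, hence has full measure by ergodicity). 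For such $n\ge n_0$ we have $M(T^n\omega)\le n_0\le n$, so $\psi_n(T^n\omega)\ge \underline L^-(E)-\epsilon$. Consequently $\overline L^+(E) = \limsup_n \psi_n(T^n\omega)\ge \underline L^-(E)-\epsilon$, and letting $\epsilon\to0$ yields $\underline L^-(E)\le \overline L^+(E)$.

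The second inequality $\underline L^+(E)\le \overline L^-(E)$ follows by the symmetric argument with forward and backward time exchanged: one uses $\psi_n(\omega) = \phi_n(T^{-n}\omega)$ (equivalently, one runs the first argument verbatim for $T^{-1}$, which is again conservative, invertible, ergodic, and measure-preserving), so that $\overline L^-(E) = \limsup_n \phi_n(T^{-n}\omega)$, and then applies \emph{backward} recurrence to catch infinitely many $n$ with $\phi_n(T^{-n}\omega)\ge \underline L^+(E)-\epsilon$.

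The \textbf{main obstacle} is precisely the recurrence step. Forward and backward products at the \emph{same} point $\omega$ bear no direct relation, so the only available link is $\phi_n(\omega)=\psi_n(T^n\omega)$, in which the relevant threshold $M(T^n\omega)$ moves with $n$; one cannot simply use that $\psi_m\ge\underline L^-(E)-\epsilon$ holds eventually. In the finite-measure setting one might hope to control this via a Borel--Cantelli or Fatou estimate, but that approach breaks down here: the sets $T^nF_{n_0}$ have measure tending to infinity, so no reverse-Fatou bound is available. Conservativity is exactly the structural input that forces the orbit to re-enter the fixed positive-measure set $F_{n_0}$ infinitely often, and it is what makes the proposition hold despite the failure of finite-measure heuristics.
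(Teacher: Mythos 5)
Your proof is correct and follows essentially the same route as the paper's: the $\SL(2)$ identity $\lVert A(E,n,\omega)\rVert=\lVert A(E,-n,T^n\omega)\rVert$ (the paper's $f_{-n}(\omega)=f_n(T^{-n}\omega)$), a positive-measure set on which the liminf bound holds uniformly past a fixed index (the paper's $W=\cap_{n\ge m}A_n$ is your $F_{n_0}$), and conservativity plus ergodicity to make the orbit visit that set infinitely often (the paper cites Aaronson's Proposition 1.2.2, which is exactly your Halmos-recurrence-plus-ergodicity argument). The only cosmetic difference is that you spell out $\underline L^-(E)\le\overline L^+(E)$ while the paper spells out $\underline L^+(E)\le\overline L^-(E)$, each deducing the other by symmetry.
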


\begin{proof}
Use the same notation as in the proof of the previous theorem. Notice that for $n>0$,
\begin{equation}\label{ml01}
f_{-n}(\omega) = f_n(T^{-n}\omega).
\end{equation}

Let $\delta < \underline L^+(E)$. Then for $\mu$-a.e.\ every $\omega$, the inequality $f_n(\omega)/n \le \delta$ holds for finitely many positive values of $n$. Thus, denoting
\[
A_n = \{ \omega : f_n(\omega)/n > \delta \}
\]
we have
\[
\mu (\Omega \setminus \cup_{m\ge 1} \cap_{n\ge m} A_n ) = 0
\]
so for some value of $m\ge 1$, the set $W = \cap_{n\ge m} A_n$ obeys
\[
\mu ( W ) > 0.
\]
Since $T$ is invertible and ergodic, by \cite[Prop.~1.2.2]{A},
for $\mu$-a.e.\ $\omega$, $\omega$ is in $T^{n} W$ for infinitely many values of $n$, so by \eqref{ml01}, $f_{-n}(\omega)/n > \delta$ for infinitely many values of $n$. Thus, $\overline L^-(E) > \delta$.

Since this holds for any $\delta< \underline L^+(E)$, we have shown $\overline L^-(E) \ge \underline L^+(E)$. The other inequality is analogous.
\end{proof}

However, the upper and lower Lyapunov exponents are not necessarily equal. To see this, we will rely on the construction in Section~\ref{SBirkhoff} and the avalanche principle. The avalanche principle was introduced by Goldstein--Schlag~\cite{GS}; we will use a strengthened version due to Bourgain--Jitomirskaya~\cite{BJ}.

\begin{lemma}[{\cite[Lemma 5]{BJ}}] \label{L4.4} Let $\mu$ be sufficiently large, $N=3^s$, and $A_1,\dots,A_N \in \SL(2,\mathbb{R})$ such that $\lVert A_j \rVert \ge \mu$ and
\[
\left\lvert \log \lVert A_j\rVert + \log \lVert A_{j+1} \rVert - \log\lVert A_{j+1} A_j \rVert \right\rvert  < \tfrac 12 \log \mu.
\]
Then
\[
\left\lvert \log \left\lVert \prod_{j=N}^1 A_j \right\rVert  + \sum_{j=2}^{N-1} \log \lVert A_j \rVert - \sum_{j=2}^{N-1} \log\lVert A_{j+1} A_j \rVert \right\rvert  <  C_1 \frac N \mu,
\]
where $C_1$ is an absolute constant.
\end{lemma}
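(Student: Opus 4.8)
The plan is to reduce everything to the geometry of a single large $\SL(2,\mathbb{R})$ matrix and then propagate the resulting estimates through the product by a triadic induction on $s$, which is exactly what the hypothesis $N=3^s$ is tailored for. For $A\in\SL(2,\mathbb{R})$ with $\|A\|=\sigma\ge\mu$ I would use the singular value decomposition to record orthonormal pairs $(v_A^+,v_A^-)$ and $(u_A^+,u_A^-)$ with $Av_A^+=\sigma u_A^+$ and $Av_A^-=\sigma^{-1}u_A^-$; the ``most expanding input'' direction $v_A^+$ and ``most expanding output'' direction $u_A^+$ are the basic invariants. The key computation is that if a unit vector $w$ makes angle $\vartheta$ with the expanding input direction $v_A^+$, then $\|Aw\|$ is comparable to $\sigma|\cos\vartheta|$ up to an additive error of order $\sigma^{-1}$, and $Aw/\|Aw\|$ lies within $O(\sigma^{-2}/|\cos\vartheta|)$ of $\pm u_A^+$. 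Thus a large matrix collapses almost every direction onto $u_A^+$, with quantitative control governed by how far $w$ is from the contracting direction.

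Next I would convert the no-cancellation hypothesis into a quantitative angle bound. Since $\|A_{j+1}A_j\|\le\|A_{j+1}\|\,\|A_j\|$ always holds, the assumption
\[
\log\|A_j\|+\log\|A_{j+1}\|-\log\|A_{j+1}A_j\|<\tfrac12\log\mu
\]
is equivalent to $\|A_{j+1}A_j\|\ge\mu^{-1/2}\|A_j\|\,\|A_{j+1}\|$, and feeding $w=u_{A_j}^+$ into the single-matrix estimate shows this forces $u_{A_j}^+$ to stay away from the contracting input direction of $A_{j+1}$; concretely $|\langle u_{A_j}^+,\,v_{A_{j+1}}^+\rangle|\gtrsim\mu^{-1/2}$. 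This separation is precisely what prevents the expanding direction emerging from one factor from being annihilated by the next.

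With these two ingredients the base case $N=3$ is a direct computation: applying the single-matrix estimate to the triple $A_3A_2A_1$ and using the angle bound yields
\[
\left|\,\log\|A_3A_2A_1\|+\log\|A_2\|-\log\|A_3A_2\|-\log\|A_2A_1\|\,\right|=O(\mu^{-1}),
\]
which is the asserted telescoping relation for three factors. For the step from $3^s$ to $3^{s+1}$ I would group the matrices into $3^s$ consecutive triples $B_i=A_{3i}A_{3i-1}A_{3i-2}$. The two-matrix estimates show each $B_i\in\SL(2,\mathbb{R})$ again has large norm $\|B_i\|\gtrsim\mu^2$, with expanding output direction within $O(\mu^{-1})$ of $u_{A_{3i}}^+$ and expanding input direction within $O(\mu^{-1})$ of $v_{A_{3i-2}}^+$; crucially, the no-cancellation bound between $A_{3i}$ and $A_{3i+1}$ from the original hypothesis is inherited, with room to spare since $\mu^2\gg\mu$, by the block pair $(B_i,B_{i+1})$. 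One then applies the inductive hypothesis to $B_1,\dots,B_{3^s}$ and reconciles the block-level telescoping sum with the original matrix-level sum using the base case inside each block, so that the errors add to $O(3^{s+1}/\mu)=O(N/\mu)$.

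I expect the main obstacle to be the bookkeeping in the inductive step: one must verify that the $O(\mu^{-1})$ deviations of the expanding and contracting directions of the block products $B_i$ from those of their outer factors do not erode the angle lower bound needed to reapply the avalanche hypothesis at the next level, and that the per-step errors accumulate only additively (linearly in the number of factors) rather than multiplicatively. Keeping all constants uniform across the $s$ levels of recursion, so that the final $C_1$ is genuinely absolute and the hypothesis ``$\mu$ sufficiently large'' is invoked only once, is the delicate point; this uniform control is exactly the strengthening of the Bourgain--Jitomirskaya form over the original Goldstein--Schlag statement, and I would follow their argument for that step.
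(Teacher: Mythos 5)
The first thing to say is that the paper does not prove this statement at all: it is imported verbatim as \cite[Lemma 5]{BJ} and used as a black box, so there is no internal proof to compare yours against. Your sketch should therefore be judged against the published arguments of Goldstein--Schlag \cite{GS} and Bourgain--Jitomirskaya \cite{BJ}. Its core ingredients are the right ones and match those proofs: the singular-value analysis of a single large $\SL(2,\mathbb{R})$ matrix, the deduction that the non-cancellation hypothesis forces $\lvert\langle u_{A_j}^+, v_{A_{j+1}}^+\rangle\rvert\gtrsim\mu^{-1/2}$, and the three-matrix identity as the basic telescoping unit. (You also silently correct what appears to be a typo in the statement as printed here: the second sum must run from $j=1$, as in your base case and in \cite{GS,BJ}; with the printed range $j=2$ the $N=3$ case already fails, since the left-hand side reduces to $\log\lVert A_2A_1\rVert\ge\tfrac32\log\mu$.) However, the published proofs do not proceed by triadic induction; they track the expanding direction through the entire product in one pass and telescope, which is why the hypothesis $N=3^s$ is not actually needed for the lemma and appears in this paper only because of how it is applied.

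Your inductive scheme, as written, has two concrete gaps. First, the reconciliation step is not self-contained: besides expressing $\log\lVert B_i\rVert$ via the $N=3$ base case, you must express each $\log\lVert B_{i+1}B_i\rVert$ --- a product of six of the original matrices --- in terms of the $A$-level quantities $\log\lVert A_j\rVert$ and $\log\lVert A_{j+1}A_j\rVert$, and $6$ is not a power of $3$, so the induction hypothesis does not cover it; you would need to prove the lemma directly for a range of small $N$, at which point the direct argument for general $N$ is no harder. Second, the error bookkeeping is too coarse: you claim the expanding/contracting directions of $B_i$ deviate from those of the outer factors by $O(\mu^{-1})$, but the inner products being perturbed can be as small as $\mu^{-1/2}$, so an $O(\mu^{-1})$ perturbation only controls their logarithms to $O(\mu^{-1/2})$ per block junction, yielding a total error $O(N/\sqrt{\mu})$ rather than the asserted $O(N/\mu)$. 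The fix is available in your own single-matrix estimate --- the deviation is really $O(\sigma^{-2}/\lvert\cos\vartheta\rvert)=O(\mu^{-3/2})$ when the angle is bounded below by $\mu^{-1/2}$ --- but it must be stated and propagated at that strength for the induction to close with an absolute constant $C_1$.
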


Note that if all the all conditions of the above lemma hold and
\[
\left\lvert \log \lVert A_j\rVert + \log \lVert A_{j+1} \rVert - \log\lVert A_{j+1} A_j \rVert \right\rvert  < \gamma
\]
for some $\gamma\le \tfrac 12 \log \mu$, then the above inequalities imply
\begin{equation}\label{ml60}
\log \left\lVert \prod_{j=N}^1 A_j \right\rVert  \ge \sum_{j=2}^{N-1} \log \lVert A_{j+1} \rVert - (N-2) \gamma - C_1 \frac{N}\mu,
\end{equation}
which is the form we will use below.

\begin{prop}
For large enough $M>0$, there exist bounded sampling functions $f:\Omega \to \mathbb{R}$ such that $\overline{L}^+(E) > \underline{L}^+(E)$ when $\lvert E \rvert > M$.
\end{prop}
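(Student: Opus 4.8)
The plan is to take a sampling function of the form $f = c\,g$, where $g\colon\Omega\to\{0,1\}$ is the function furnished by Theorem~\ref{thm:0} and $c>0$ is a fixed constant, chosen at the end larger than the absolute constant $C_1$ of Lemma~\ref{L4.4}. First I would re-run the construction in the proof of Theorem~\ref{thm:0} with the extra requirement that every integer $N_k$ be a power of $3$. This is possible because the a.e.\ convergence \eqref{ml05} forces $A_N(\,\cdot\,,\theta_{Y_{k-1}},T)\to 0$ in measure, so the set controlled by Lemma~\ref{lem:muinf1} is small for \emph{all} sufficiently large $N$; in particular one may always take $N_k=3^{s_k}$. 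Then both subsequences $p_k=N_{2k}$ and $q_k=N_{2k+1}$ consist of powers of $3$, and for $\mu$-a.e.\ $\omega$ one has $A_{p_k}(\omega,g,T)\to1$, $A_{q_k}(\omega,g,T)\to0$, while $\limsup_N A_N=1$ and $\liminf_N A_N=0$. Fix a real $E$ with $|E|$ large. Since $\det A(E,\omega)=1$ we have $A(E,\omega)\in\SL(2,\R)$, and analyzing the singular values gives
\begin{equation}\label{eq:ap.single}
\log\lVert A(E,\omega)\rVert = \log|E-f(\omega)| + O(|E|^{-2})
\end{equation}
uniformly in $\omega$. Because $g$ is $\{0,1\}$-valued, $\log|E-f| = \log|E| + g\,\log|1-c/E|$, so that
\begin{equation}\label{eq:ap.birk}
\frac1N\sum_{j=0}^{N-1}\log\lVert A(E,T^j\omega)\rVert
= \log|E| + \log|1-c/E|\;A_N(\omega,g,T) + O(|E|^{-2}).
\end{equation}

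To bound $\underline L^+(E)$ from above I would use only submultiplicativity, which imposes no condition on $N$. For every $N$, $\log\lVert A(E,N,\omega)\rVert\le\sum_{j=0}^{N-1}\log\lVert A(E,T^j\omega)\rVert$; dividing by $N$, using \eqref{eq:ap.birk}, and taking $\liminf_{N\to\infty}$, the extremal Birkhoff value enters with the sign of $\log|1-c/E|$. For $E>M>0$ one gets $\underline L^+(E)\le\log(E-c)+O(|E|^{-2})$ (using $\limsup_N A_N=1$), and for $E<-M$ one gets $\underline L^+(E)\le\log|E|+O(|E|^{-2})$ (using $\liminf_N A_N=0$).

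For the matching lower bound on $\overline L^+(E)$ I would invoke the avalanche principle. Taking the parameter of Lemma~\ref{L4.4} (there denoted $\mu$) to be $\rho:=|E|-c-1$, the matrices $A_j:=A(E,T^{j-1}\omega)$ satisfy $\lVert A_j\rVert\ge\rho$ once $|E|$ is large, and a direct computation of $A_{j+1}A_j$ (whose dominant entry is $(E-f_{j+1})(E-f_j)-1$) shows that the submultiplicativity defect is $O(|E|^{-2})=:\gamma<\tfrac12\log\rho$. Hence \eqref{ml60} applies to any block of length $N=3^s$. Evaluating it along the power-of-$3$ subsequence on which \eqref{eq:ap.birk} is largest — the sequence $q_k$ (where $A_{q_k}\to0$) when $E>0$, and $p_k$ (where $A_{p_k}\to1$) when $E<0$ — and discarding the two boundary terms absent from the sum in \eqref{ml60}, I obtain $\overline L^+(E)\ge\log E-\gamma-C_1/\rho+O(|E|^{-2})$ for $E>0$, and $\overline L^+(E)\ge\log(|E|+c)-\gamma-C_1/\rho+O(|E|^{-2})$ for $E<0$.

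Combining the two estimates and using $-\log(1-c/E)=c/E+O(|E|^{-2})$ and $C_1/\rho=C_1/|E|+O(|E|^{-2})$, in either sign one finds
\[
\overline L^+(E)-\underline L^+(E)\ \ge\ \frac{c-C_1}{|E|}+O(|E|^{-2}).
\]
Choosing $c>C_1$ and then $M$ large enough that the right-hand side is strictly positive for all $|E|>M$ finishes the proof. I expect the main obstacle to be precisely this final comparison: the avalanche principle both forces the block length to be $N=3^s$ and loses a term $C_1/\rho\asymp C_1/|E|$ in \eqref{ml60}, while the gap produced by the oscillating Birkhoff averages is only of order $c/|E|$. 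Overcoming it is what dictates the two non-obvious choices above, namely arranging the Theorem~\ref{thm:0} construction so that the relevant subsequences are powers of $3$, and amplifying $g$ by a factor $c>C_1$ so that the Birkhoff gap beats the avalanche loss.
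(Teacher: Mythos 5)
Your proposal is correct and follows essentially the same route as the paper: build the potential from the Theorem~\ref{thm:0} construction with the $N_k$ forced to be powers of $3$, bound $\underline{L}^+(E)$ from above by submultiplicativity along the subsequence where the Birkhoff average is extremal, bound $\overline{L}^+(E)$ from below via the avalanche principle (Lemma~\ref{L4.4}, in the form \eqref{ml60}) along the opposite subsequence, and choose the amplitude of the two-valued potential large compared to $C_1$ so that the single-step gap of order $c/\lvert E\rvert$ beats the avalanche loss $C_1/\mu+\gamma$. The only differences are cosmetic: the paper takes the two values symmetric, $v_{1,2}=\pm 2C_1$, and works with the explicit function $g(x)=\tfrac12\log\bigl(1+\tfrac{x^2}{2}+\sqrt{x^2+\tfrac{x^4}{4}}\bigr)$ and the mean value theorem rather than your expansion $\log\lVert A(x)\rVert=\log\lvert x\rvert+O(\lvert x\rvert^{-2})$, but the resulting estimates are equivalent.
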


\begin{proof} Let us follow the construction in Theorem~\ref{thm:0}, noting that we can force all the numbers $N_k$ in that construction to be powers of $3$. We pick a sampling function $f$ such that the potential takes two possible values, $v_1$ and $v_2$, and that $\mu$-almost surely,
\begin{align*}
\limsup_{s\to\infty} \frac 1{3^s} \lvert \{ j\in\mathbb{Z} : 1\le j \le 3^s, f(T^j \omega) = v_1 \} \rvert & = 1 \\
\liminf_{s\to\infty} \frac 1{3^s} \lvert \{ j\in\mathbb{Z} : 1\le j \le 3^s, f(T^j \omega) = v_1 \} \rvert & = 0
\end{align*}
Then, for every $E$,  $\omega$-almost surely, denoting $A(x) = \begin{pmatrix} x & -1 \\ 1 & 0 \end{pmatrix}$,
\begin{align*}
\limsup_{s\to\infty} \frac 1{3^s} \sum_{j=1}^{3^s} \log \lVert A(E,T^j\omega) \rVert  & = \max_i \log \lVert A(E-v_i) \rVert \\
\liminf_{s\to\infty} \frac 1{3^s} \sum_{j=1}^{3^s} \log \lVert A(E,T^j\omega) \rVert & = \min_i \log \lVert A(E-v_i) \rVert
\end{align*}
Sub-multiplicativity of matrix norms guarantees that
\[
\ul L^+(E) \le \liminf_{s\to\infty} \frac 1{3^s} \log \left\lVert  A(E,3^s,\omega) \right\rVert \le  \min_i \log \lVert A(E-v_i) \rVert.
\]
If the avalanche principle is applicable to $A(E,j,\omega)$ with a suitable choice of $\mu$, \eqref{ml60} implies that
\[
\ol L^+(E) \ge \limsup_{s\to\infty} \frac 1{3^s} \log \left\lVert  A(E,3^s,\omega) \right\rVert \ge \max_i \log \lVert A(E-v_i) \rVert  - \frac{C_1}\mu - \gamma.
\]
Thus, $\overline L^+(E) > \underline L^+(E)$ will follow from
\begin{equation}\label{ml65}
 \max_i \log \lVert A(E-v_i) \rVert  -  \min_i \log \lVert A(E-v_i) \rVert  > \frac{C_1}\mu + \gamma.
\end{equation}

Thus, it suffices to show that there is a suitable choice of $v_1, v_2$ such that, for all large enough $E$, there are choices of $\mu$, $\gamma$ such that the avalanche principle is applicable and \eqref{ml65} holds. We will now show that this is true if we choose
\[
v_1 = \delta = 2C_1, \qquad v_2 = - \delta, \qquad \mu = E-\delta, \qquad \gamma = \frac 4{(E-\delta)^2}.
\]
For large enough $E$, it is then obvious that
\[
\gamma < \frac 12 \log \mu, \quad \frac{\delta}{1+E+\delta} > \frac {C_1}\mu + \gamma.
\]
We will now need some norm estimates. Let
\begin{align*}
g(x) & = \frac 12 \log\left( 1+ \frac{x^2}2 + \sqrt{x^2 + \frac{x^4}4} \right).
\end{align*}
If $A \in \SL(2,\mathbb{R})$ and $\Tr(A^*A)=2+x^2$ for some $x\in\mathbb{R}$, then
\begin{align*}
\log \lVert A\rVert & = g(x),
\end{align*}
since $\lVert A\rVert^2$ is the larger eigenvalue of $A^*A$ and eigenvalues of $A^*A$ are the solutions of $\lambda^2 - (2+x^2) \lambda +1=0$. It is straightforward to compute $\Tr(A(x)^* A(x))$ and  $\Tr(A(y)^* A(x)^* A(x) A(y))$ to see
\begin{align*}
\log \lVert A(x) \rVert & = g(x), \\
\log \lVert A(x)A(y) \rVert &= g(\sqrt{x^2y^2 + (x-y)^2}).
\end{align*}
For $x>0$,
\[
g(x) = \frac 12 \log\left( 1+ \frac{x^2}2 + \sqrt{x^2 + \frac{x^4}4} \right) \ge \frac 12 \log\left( 1+ \frac{x^2}2 + \frac{x^2}2 \right) \ge \frac 12 \log(x^2) = \log x,
\]
which implies that
\[
\log\lVert A(E\pm \delta) \rVert \ge \log \mu.
\]
In the opposite direction, for $x>0$, we use $\sqrt{1+ 4/x^2} \le 1 + 2/x^2$ to estimate
\begin{equation*}
g(x) - \log x
= \frac 12 \log\left( \frac 1{x^2}+ \frac 12 + \frac 12 \sqrt{1 + \frac 4{x^2}} \right)
  \le  \frac 12 \log\left( 1 + \frac 2{x^2}  \right)  \le \frac 1{x^2}.
\end{equation*}

For $1 \le x\le y$, using this inequality three times and noting $\sqrt{x^2 y^2 + (x-y)^2}\ge x$, we get
\begin{align*}
\left\lvert g(x) + g(y) - g(\sqrt{x^2y^2 + (x-y)^2}) \right\rvert  &  \le \frac 3{x^2}  + \frac{1}{2} \log \frac {x^2y^2 + (x-y)^2}{x^2 y^2}  \le \frac 4{x^2} \label{ml61}
\end{align*}
where, for the last step, we used $\log (1 + \frac {(x-y)^2}{x^2 y^2}) \le \frac {(x-y)^2}{x^2 y^2} \le \frac 1{x^2}$.
Thus, for large enough $E$ and $x, y \in \{ E-\delta, E+\delta\}$,
\[
\left\lvert \log \lVert A(x)\rVert + \log \lVert A(y) \rVert - \log\lVert A(x) A(y) \rVert \right\rvert  < \gamma.
\]

For $x>0$,
\begin{align*}
g'(x) & = \frac 12 \frac{x+ \frac{2x+x^3}{2\sqrt{x^2+\frac{x^4}4}}}{1+\frac{x^2}2 + \sqrt{x^2+\frac{x^4}4}}  \ge \frac 12 \frac{x+ 1}{1+\frac{x^2}2 + x + \frac{x^2}2}  \ge \frac 1{2(1+x)}
\end{align*}
so, by the mean value theorem,
\[
 g(x+\delta) - g(x-\delta)  \ge \frac {\delta}{1+x+\delta}
\]
for $x>\delta$. In particular,
\[
g(E+\delta) - g(E-\delta) > \frac{C_1}\mu + \gamma.
\]
By these estimates, the avalanche principle is applicable and, by the estimates above, $\overline L^+(E) > \underline L^-(E)$ for all large enough $E$. Note that $g$ is an even function so the above discussion applies with minimal modifications to the case of negative $E$ with large enough $\lvert E\rvert$. This completes the proof.
\end{proof}

\begin{prop}
There exists a conservative, invertible, measure preserving ergodic
transformation $(\Omega,\B,\mu,T)$, a sampling function $f\colon \Omega\to \mathbb{R}$
and $M>0$ such that
\[
\ol{L}^+(E) > \ol{L}^-(E)
\]
when $E > M$, and
\[
\ol{L}^+(E) < \ol{L}^-(E)
\]
when $E < - M$.
\end{prop}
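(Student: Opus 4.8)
The plan is to produce the asymmetry from a single two-valued sampling function whose forward and backward frequencies of visiting the two values occupy two bands separated by the level $1/2$, and then to read $\ol L^\pm$ off these bands. Take $(\Omega,\B,\mu,T)$ to be the suspension over a badly approximable rotation from Section~\ref{sec:ex}, fix a large $\delta>0$, and set $v_1=-\delta$, $v_2=+\delta$. On the tower over $u$ (levels $1\le m\le h(u)$) I define $f$ to equal $v_1$ on the bottom block $m\le\sqrt{h(u)}$, to equal $v_2$ on the top block $m>h(u)-\sqrt{h(u)}$, and to alternate between $v_1$ and $v_2$ by the parity of $m$ on the levels in between. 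The balanced middle pins the overall $v_1$-density of \emph{every} tower at exactly $1/2$, while the pure end-blocks generate one-sided spikes. Writing $\sigma_n^\pm$ for the frequency of $v_1$ among $f(\omega_0),\dots,f(\omega_{n-1})$ (resp.\ $f(\omega_{-1}),\dots,f(\omega_{-n})$), where $\omega_k=T^k\omega_0$, the first step is to prove, for every base point with $m_0=1$,
\[
\tfrac12\le\sigma_n^+\le1,\qquad 0\le\sigma_n^-\le\tfrac12\qquad(n\ge1),
\]
with $\limsup_n\sigma_n^+=1$ and $\liminf_n\sigma_n^-=0$. The band inclusions are prefix counts: because the $v_1$-rich block sits at the bottom of each tower and each complete tower has density exactly $1/2$, every forward prefix carries $v_1$-count $\ge n/2$, and symmetrically every backward prefix carries $v_1$-count $\le n/2$. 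The two extremes come from the mechanism $t_n/q_n\to0$ of Section~\ref{sec:ex} with $q_n=\lfloor\sqrt{h(\beta_n)}\rfloor$: on first entering an exceptionally tall tower one meets a pure $v_1$-block (going forward) or a pure $v_2$-block (going backward) whose length far exceeds the elapsed time.

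Next I would bound $\ol L^\pm$ from both sides. With $g(x)=\log\lVert A(x)\rVert$ and $\phi_E(s)=s\,g(E-v_1)+(1-s)\,g(E-v_2)$, submultiplicativity gives $\tfrac1n\log\lVert A(E,\pm n,\omega_0)\rVert\le\phi_E(\sigma_n^\pm)$. Since $\phi_E$ is increasing for $E>0$ and decreasing for $E<0$, the band inclusions yield the upper bounds
\[
\ol L^-(E)\le\phi_E(\tfrac12)\ \ (E>0),\qquad \ol L^+(E)\le\phi_E(\tfrac12)\ \ (E<0).
\]
For matching lower bounds I use the pure blocks and Gelfand's formula. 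Along the tall-tower subsequence the forward transfer matrix factors as $A(E-v_1)^{q_n}\,A(E,t_n,\omega_0)$ with $q_n/t_n\to\infty$, so
\[
\frac{\log\lVert A(E,t_n+q_n,\omega_0)\rVert}{t_n+q_n}\ \ge\ \frac{\log\lVert A(E-v_1)^{q_n}\rVert-\log\lVert A(E,t_n,\omega_0)\rVert}{t_n+q_n}\ \longrightarrow\ \ell(E-v_1),
\]
where $\ell(x)=\log\rho(A(x))=\log\tfrac{\lvert x\rvert+\sqrt{x^2-4}}{2}$ is the single-matrix (spectral radius) exponent; hence $\ol L^+(E)\ge\ell(E-v_1)$, and symmetrically $\ol L^-(E)\ge\ell(E-v_2)$, the backward block contributing $A(E-v_2)^{-q_n}$ whose norm also grows at rate $\ell(E-v_2)$ because $\rho(A^{-1})=\rho(A)$ on $\SL(2,\R)$.

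Finally I would compare $\ell(E\mp\delta)$ with $\phi_E(\tfrac12)=\tfrac12 g(E-v_1)+\tfrac12 g(E-v_2)$. Writing
\[
\ell(E+\delta)-\phi_E(\tfrac12)=\tfrac12\bigl(g(E+\delta)-g(E-\delta)\bigr)-\bigl(g(E+\delta)-\ell(E+\delta)\bigr),
\]
the paper's estimate $g(E+\delta)-g(E-\delta)\ge\tfrac{\delta}{1+E+\delta}$ controls the first term from below, while $g(x)-\ell(x)=O(x^{-2})$ controls the second from above, giving $\ell(E+\delta)-\phi_E(\tfrac12)\ge\tfrac{\delta}{2(1+E+\delta)}-\tfrac{C}{E^2}>0$ for $E>M$ once $M$ and $\delta$ are large; the even symmetry of $g$ and $\ell$ furnishes the mirror inequality for $E<-M$. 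Combining the three displays, for $E>M$ one gets $\ol L^+(E)\ge\ell(E+\delta)>\phi_E(\tfrac12)\ge\ol L^-(E)$, and for $E<-M$ one gets $\ol L^-(E)\ge\ell(E-\delta)>\phi_E(\tfrac12)\ge\ol L^+(E)$, which is the claim. (As $\ol L^\pm(E)$ are $\mu$-a.e.\ constant by the preceding theorem, testing on base points with $m_0=1$ determines them.)

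The hard part will be the first step: engineering $f$ so that the forward and backward $v_1$-frequency bands are cleanly separated by the single value $1/2$ — forward prefixes never falling below, and backward prefixes never rising above, the common tower density — while the pure end-blocks still push the extreme frequencies all the way to $1$ and $0$. This balance is exactly what the middle-plus-endblocks design provides and is the crux of the construction: omitting the balanced middle collapses both bands to $\{0\}$ at tower boundaries and yields merely $\ol L^+\ge\ol L^-$ with no reversal. A secondary subtlety, already flagged, is that the block lower bound delivers the spectral-radius exponent $\ell$ rather than the operator-norm exponent $g$ used in the upper bound, so the final comparison must swallow the lower-order gap $g-\ell=O(E^{-2})$, which is the reason for taking $\delta$ and $M$ large.
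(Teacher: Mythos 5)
Your proof is correct in outline but takes a genuinely different route from the paper. The paper keeps the sampling function of Section~\ref{sec:ex} essentially as is (rescaled to $\pm\delta$) and obtains the lower bound on $\ol{L}^+$ from the Bourgain--Jitomirskaya avalanche principle applied along the scales $3^s$, which forces the coupling $\delta=6C_1$ and the verification of the pairwise norm hypotheses; the upper bound $\ol{L}^-(E)\le g(E-\delta)$ then comes for free from submultiplicativity because the backward frequency of the spiky value converges to $0$. You instead re-engineer the sampling function (pure end blocks plus a balanced alternating middle) so that the lower bound can be read off elementarily from a single long pure block via $\lVert BC\rVert\ge\lVert B\rVert/\lVert C^{-1}\rVert$ and Gelfand's formula, at the cost of a weaker upper bound $\phi_E(\tfrac12)$ that must be rescued by the frequency bands $[\tfrac12,1]$ and $[0,\tfrac12]$. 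Your approach avoids the avalanche principle entirely and leaves $\delta$ free; its price is the extra combinatorics of the tower word, which is genuinely needed for the $E<-M$ half (with the unmodified function the forward frequencies oscillate between $0$ and $1$, so submultiplicativity only gives $\ol{L}^+\le g(|E|+\delta)$, which does not separate from $\ol{L}^-$).

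Two points remain to be filled in, neither fatal. First, the band inclusions: they do hold if each tower word is taken of the form $v_1^{q}(v_1v_2)^{m}v_2^{q}$ with equal end blocks and even middle (prefix counts $\ge\lceil l/2\rceil$, suffix counts $\le\lfloor l/2\rfloor$, total exactly half), but the lengths $[h(u)]$ and $[\sqrt{h(u)}]$ must be adjusted by $O(1)$ to make this exact, and you should check the $O(1)$ corrections do not disturb the two limits you need. Second, the claim ``symmetrically $\ol{L}^-(E)\ge\ell(E-v_2)$'' requires a backward tall-tower subsequence, i.e.\ indices $k$ with $t'_{k-1}/q_{-k}\to 0$; Section~\ref{sec:ex} only produces the running-minimum subsequence for the forward rotation orbit, so you must rerun that argument for $R^{-1}$ (which works verbatim, since $-\alpha$ is also badly approximable, but is not literally contained in the paper's backward analysis, which instead proves convergence to $0$). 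Also, $\delta$ need not be large in your final comparison --- only $M$ does.
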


\begin{proof}
We start with the ergodic system and function $f$ constructed in Section~\ref{sec:ex}. We rescale $f$ so that it takes two possible values, $v_1 = \delta$ and $v_2 = -\delta$, such that for almost every $\omega$,
\begin{align*}
\limsup_{N\to\infty} \frac 1{N} \lvert \{ j\in\mathbb{Z} : 1\le j \le N, f(T^j \omega) = \delta \} \rvert & = 1, \\
\lim_{N\to\infty} \frac 1{N} \lvert \{ j\in\mathbb{Z} : 1\le j \le N, f(T^{-j} \omega) = \delta \} \rvert & = 0.
\end{align*}
The first of these inequalities implies
\begin{equation}\label{ml99}
\limsup_{s\to\infty} \frac 1{3^s} \lvert \{ j\in\mathbb{Z} : 1\le j \le 3^s, f(T^j \omega) = \delta \} \rvert \ge \frac 13.
\end{equation}
From here, we use the same approach as in the previous proof: the avalanche principle is used to prove that different asymptotics of Birkhoff averages imply different asymptotics of the subadditive logs of matrix norms. If we choose
\[
\delta = 6 C_1, \qquad \mu = E-\delta, \qquad \gamma = \frac 4{(E-\delta)^2},
\]
(the extra factor of $3$ for $\delta$ comes from the factor of $3$ in \eqref{ml99}), since then
\[
\gamma < \frac 12 \log \mu,\qquad \frac{\delta}{1+E+\delta} > 3 \left( \frac{C_1} \mu + \gamma \right)
\]
and we prove as in the previous proof that
\[
\ol L^+(E) \ge \frac 13 g(E+\delta) + \frac 23 g(E-\delta) - \frac{C_1}\mu - \gamma  >  g(E-\delta) \ge \ol L^-(E). \qedhere
\]
\end{proof}

An analogous argument proves the analogous proposition for lower Lyapunov exponents:

\begin{prop}
There exists a conservative, invertible, measure preserving ergodic
transformation $(\Omega,\B,\mu,T)$, a sampling function $f\colon \Omega\to \mathbb{R}$
and $M>0$ such that
\[
\ul{L}^+(E) > \ul{L}^-(E)
\]
when $E > M$, and
\[
\ul{L}^+(E) < \ul{L}^-(E)
\]
when $E < - M$.
\end{prop}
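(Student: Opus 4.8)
The plan is to run the proof of the preceding proposition almost verbatim, replacing the upper Lyapunov exponents by the lower ones---that is, replacing every $\limsup$ by a $\liminf$---and interchanging accordingly the roles of the avalanche-principle lower bound \eqref{ml60} and the submultiplicativity upper bound. The one structural change is that, since we now wish to pin down a $\liminf$, the oscillation supplied by Section~\ref{sec:ex} must sit on the side whose lower exponent we want to drive all the way down to its extreme value. I would therefore take the ergodic system of Section~\ref{sec:ex} but with its transformation replaced by its inverse, so that the forward Birkhoff averages of the (rescaled) potential \emph{converge} while the backward ones oscillate with $\limsup$ equal to $1$ and $\liminf$ equal to $0$. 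As in the previous proof I rescale $f$ to take the two values $v_1=\delta$ and $v_2=-\delta$ and force the block lengths to be powers of $3$, so that Lemma~\ref{L4.4} applies to the products $A(E,3^s,\omega)$.

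I would then reuse, word for word, the one-step estimates from the proof of the previous proposition: with $A(x)=\begin{pmatrix} x & -1 \\ 1 & 0 \end{pmatrix}$ one has $\log\lVert A(E-v_i)\rVert=g(E-v_i)$, both of these norms exceed $\mu=E-\delta$, and for $x,y\in\{E-\delta,E+\delta\}$ the near-multiplicativity bound $\lvert g(x)+g(y)-\log\lVert A(x)A(y)\rVert\rvert<\gamma$ holds with $\gamma=4/(E-\delta)^2<\tfrac12\log\mu$ once $E$ is large. These are exactly the hypotheses needed to invoke \eqref{ml60}, with the same choice $\delta=6C_1$.

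With these in hand the two lower exponents are squeezed between $\liminf$'s of Birkhoff averages of $g(E-f(\cdot))$. On the oscillating (backward) side, submultiplicativity gives $\ul L^-(E)\le \liminf_{n}\frac1n\sum_{j=1}^n g(E-f(T^{-j}\omega))$, and the oscillation forces this $\liminf$ down to $\min_i g(E-v_i)=g(E-\delta)$ for $E>0$. On the convergent (forward) side, \eqref{ml60} gives $\ul L^+(E)\ge g(E+\delta)-C_1/\mu-\gamma$, because the forward averages converge to the pure $v_2=-\delta$ configuration. The mean-value estimate $g(E+\delta)-g(E-\delta)\ge \delta/(1+E+\delta)$ obtained in the previous proof, together with the inequality $\delta/(1+E+\delta)>C_1/\mu+\gamma$ guaranteed by the choice of constants, then yields the strict separation $\ul L^+(E)>\ul L^-(E)$ for $E>M$. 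Finally, since $g$ is even, replacing $E$ by $-E$ interchanges $g(E-\delta)$ and $g(E+\delta)$ and thereby reverses the inequality, giving $\ul L^+(E)<\ul L^-(E)$ for $E<-M$.

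The step I expect to be the main obstacle is precisely the $\liminf$ analogue of the delicate point in the previous proof: one must verify that the lower exponent on the oscillating side is genuinely governed by the \emph{smallest} subsequential value of the Birkhoff average, rather than being lifted by the avalanche corrections to some intermediate value, and that the correction $C_1/\mu+\gamma$ stays strictly below the true gap $g(E+\delta)-g(E-\delta)$ uniformly for all large $\lvert E\rvert$. Orienting this strict separation so that it comes out as $>$ for $E>M$ and as $<$ for $E<-M$---which hinges on the evenness of $g$ interacting correctly with the fixed forward/backward asymmetry of the construction---is the part of the bookkeeping that will require the most care.
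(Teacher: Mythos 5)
Your treatment of the $E>M$ half is essentially the intended one: run the system of Section~\ref{sec:ex} backwards so that the forward potential converges in frequency to the single letter $-\delta$, bound $\ul L^-(E)$ above by the $\liminf$ of the backward Birkhoff averages of $g(E-f(\cdot))$ via submultiplicativity, and bound $\ul L^+(E)$ below via \eqref{ml60}. The genuine gap is in the $E<-M$ half: evenness of $g$ interchanges which letter produces the larger one-step norm, but it does not reverse your inequality. For $E<-M$ the convergent (forward) side sits at $g(E+\delta)=g(\lvert E\rvert-\delta)$, which is now the \emph{smaller} of the two one-step values; and on the oscillating (backward) side the $\liminf$ of the Birkhoff average of $g(E-f(T^{-j}\omega))$ is again $\min_i g(E-v_i)=g(\lvert E\rvert-\delta)$, because the backward frequency of the letter $+\delta$ has $\liminf$ equal to $0$ (the same Cauchy--Schwarz estimate at the block endpoints $t_n$ that proves \eqref{eq:an0} shows the forward averages in \eqref{eq:an1} have $\liminf$ zero as well). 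Thus both lower exponents are trapped in the same window $[\,g(\lvert E\rvert-\delta)-C_1/\mu-\gamma,\ g(\lvert E\rvert-\delta)\,]$ and no strict separation, in either direction, can be extracted. Schematically: writing $A^\pm$ and $a^\pm$ for the $\limsup$ and $\liminf$ of the forward/backward frequency of the letter $+\delta$, your argument needs $A^+<A^-$ for $E>M$ but $a^+<a^-$ for $E<-M$; your example gives $A^+=0<1=A^-$ but $a^+=0=a^-$. Repairing this requires a sampling function whose forward and backward frequencies are separated in \emph{both} $\liminf$ and $\limsup$ (e.g.\ forward frequency converging to $0$ while the backward frequency stays bounded away from $0$), which the unmodified Section~\ref{sec:ex} construction does not supply.

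A secondary point: Lemma~\ref{L4.4}, and hence \eqref{ml60}, applies only to products of length $N=3^s$, so it bounds $\liminf_{s\to\infty}3^{-s}\log\lVert A(E,3^s,\omega)\rVert$ from below; this does not bound $\ul L^+(E)=\liminf_{n\to\infty}\frac1n\log\lVert A(E,n,\omega)\rVert$ from below, since the $\liminf$ over all $n$ may be smaller than the $\liminf$ over the subsequence. This issue is invisible in the two preceding propositions, where the avalanche principle is only used to bound a $\limsup$ from below (for which a subsequence suffices), but it is exactly where the $\liminf$ version diverges from the argument you are copying. It is fixable: for $3^s\le n<3^{s+1}$ write $A(E,3^{s+1},\omega)=A(E,3^{s+1}-n,T^n\omega)\,A(E,n,\omega)$, apply the avalanche bound at scale $3^{s+1}$ and the trivial submultiplicative upper bound to the left factor; this multiplies the error $C_1/\mu+\gamma$ by $3$, which the choice $\delta=6C_1$ is designed to absorb. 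Both of these points need to be addressed before the proposed proof is complete.
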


The following is the extension of the Ishii--Pashtur theorem to the infinite measure setting.

\begin{theorem}
\[
\Sigma_\ac \subset \overline{\{ E\in\mathbb{R} : \overline L^+(E) = 0\text{ or }\overline L^-(E)=0 \}}^\ess
\]
\end{theorem}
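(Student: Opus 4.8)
The plan is to reduce the whole-line statement to two half-line statements and to invoke the ``easy half'' of the Ishii--Pastur theorem in the form that asserts boundedness of transfer matrices spectrally almost everywhere on the absolutely continuous spectrum.

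First I would decouple the bond between sites $-1$ and $0$, writing $H_\omega = (H_\omega^- \oplus H_\omega^+) + R_\omega$, where $H_\omega^\pm$ are the restrictions of $H_\omega$ to $\ell^2(\Z_{\ge 0})$ and $\ell^2(\Z_{<0})$ and $R_\omega$ is the rank-two (hence trace-class) hopping term $\delta_{-1}\otimes\delta_0 + \delta_0\otimes\delta_{-1}$. By the Kato--Rosenblum theorem the absolutely continuous parts of $H_\omega$ and $H_\omega^-\oplus H_\omega^+$ are unitarily equivalent, so $\sigma_\ac(H_\omega) = \sigma_\ac(H_\omega^+)\cup\sigma_\ac(H_\omega^-)$ and, more precisely, the essential support of the a.c.\ part of $H_\omega$ coincides, up to a Lebesgue-null set, with the union of the essential supports of the a.c.\ parts of $H_\omega^+$ and $H_\omega^-$. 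This is the origin of the disjunction ``$\overline L^+(E)=0$ or $\overline L^-(E)=0$'': an energy may carry a.c.\ spectrum for only one of the two half-line operators, where one obtains control of the transfer matrices in only one direction.

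Next I would apply, for each fixed $\omega$ and each half-line, the standard fact that for spectrally almost every $E$ with respect to the a.c.\ part of the spectral measure the transfer matrices are bounded, $\sup_{n} \lVert A(E,n,\omega)\rVert < \infty$ (see, e.g., \cite{CL,PF}); in particular $\limsup_{n\to +\infty} \frac1n\log\lVert A(E,n,\omega)\rVert = 0$ for $\mu^+_{\ac,\omega}$-a.e.\ $E$, and symmetrically as $n\to-\infty$ for $H_\omega^-$. It remains to identify this pointwise limsup with the deterministic quantity $\overline L^+(E)$, and this is the step that requires care, since it is where the infinite measure enters. For each fixed $E$ the preceding theorem gives $\limsup_{n\to+\infty}\frac1n\log\lVert A(E,n,\omega)\rVert = \overline L^+(E)$ for $\mu$-a.e.\ $\omega$, so the set $N = \{(E,\omega) : \overline L^+(E) > 0,\ \limsup_{n\to+\infty}\frac1n\log\lVert A(E,n,\omega)\rVert = 0\}$ has $\mu$-null $\omega$-slices $N_E$ for every fixed $E$. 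Because the a.c.\ spectral measures are absolutely continuous with respect to Lebesgue measure, it suffices to control the Lebesgue measure of the $E$-slices $N_\omega$, a quantity that no longer involves the $\omega$-dependent measures. Fixing a set $F$ of finite measure and applying Tonelli to $\Leb\otimes \mu|_F$ gives $\int_F \Leb(N_\omega)\,d\mu(\omega) = \int_{\{\overline L^+>0\}} \mu(N_E\cap F)\,dE = 0$; exhausting $\Omega$ by finite-measure sets then yields $\Leb(N_\omega) = 0$, and hence $\mu^+_{\ac,\omega}(N_\omega) = 0$, for $\mu$-a.e.\ $\omega$.

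Combining these facts, for $\mu$-a.e.\ $\omega$ the essential support of $\mu^+_{\ac,\omega}$ lies, up to a Lebesgue-null set, in $\{\overline L^+ = 0\}$, and that of $\mu^-_{\ac,\omega}$ in $\{\overline L^- = 0\}$. Taking essential closures, invoking the decoupling of the first step, and using the monotonicity $\overline{A}^\ess \cup \overline{B}^\ess \subseteq \overline{A\cup B}^\ess$, I obtain
\[
\Sigma_\ac \subseteq \overline{\{\overline L^+ = 0\}}^\ess \cup \overline{\{\overline L^- = 0\}}^\ess \subseteq \overline{\{E : \overline L^+(E) = 0 \text{ or } \overline L^-(E) = 0\}}^\ess,
\]
which is the claim. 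The main obstacle is the measure-theoretic reconciliation in the third paragraph: boundedness of transfer matrices is available only for spectrally a.e.\ $E$ at fixed $\omega$, whereas $\overline L^\pm$ is defined through an almost sure statement at fixed $E$, and bridging the two when $\mu(\Omega)=\infty$ is precisely what forces the passage to Lebesgue measure and the exhaustion argument.
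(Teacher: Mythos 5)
Your architecture is the same as the paper's: a deterministic bound on transfer-matrix growth valid for spectrally a.e.\ $E$ with respect to the a.c.\ part at fixed $\omega$, combined with a Fubini/Tonelli argument to reconcile that with the definition of $\overline L^\pm$, which is an a.e.-$\omega$ statement at fixed $E$. Your third paragraph (product measure $\Leb\otimes\mu|_F$ over a finite-measure set $F$, then exhaustion) is exactly the paper's Fubini step made slightly more explicit, and your half-line decoupling via a rank-two perturbation is precisely the ``standard argument'' the paper invokes to pass from Last--Simon's half-line statement to the whole line. So the route is essentially identical.

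There is, however, one genuinely problematic step: the ``standard fact'' you invoke, namely that $\sup_n \lVert A(E,n,\omega)\rVert < \infty$ for spectrally a.e.\ $E$ with respect to the a.c.\ part of the spectral measure, is the Schr\"odinger conjecture, and it is false in general (Avila constructed ergodic counterexamples); it is not what \cite{CL} or \cite{PF} prove, as those references establish Ishii--Pastur through Oseledets' theorem, which presupposes an ergodic theorem unavailable in the infinite-measure setting. What is true, and what the paper uses, is the Last--Simon bound \cite[Theorem~3.10]{LS}: for spectrally a.e.\ $E$ w.r.t.\ the a.c.\ part, $\limsup_{N}\frac{1}{N\log^2 N}\sum_{n=1}^N \lVert A(E,\pm n,\omega)\rVert^2 < \infty$ for at least one sign. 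Since this forces $\lVert A(E,n,\omega)\rVert = O(\sqrt{n}\log n)$ along that half-line, it still yields the only consequence you actually use, $\limsup_{n}\frac{1}{n}\log\lVert A(E,n,\omega)\rVert = 0$. Your proof is therefore repaired simply by replacing the false boundedness claim with the Last--Simon Ces\`aro bound; with that substitution the argument coincides with the paper's.
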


\begin{proof}
Denote
\[
\mathcal Z = \{ E\in\mathbb{R} : \overline L^+(E) = 0\text{ or }\overline L^-(E)=0 \}.
\]
For every $E\in\mathbb R \setminus \mathcal Z$,
\begin{equation}\label{ml02}
\ol f^-(E,\omega)>0\text{ and }\ol f^+(E,\omega)>0
\end{equation}
holds for a.e. $\omega\in \Omega$. Since $f_n(E,\omega)$ are measurable functions, so are $\ul f^\pm(E,\omega)$ and $\ol f^\pm(E,\omega)$; thus, the set
\[
\{ (E,\omega) \in \mathbb R \times \Omega : \text{\eqref{ml02} holds} \}
\]
is measurable. Thus, by Fubini's theorem, for $\mu$-a.e.\ $\omega\in \Omega$, there is a set $B_\omega$ with $\lvert B_\omega\rvert =0$ such that \eqref{ml02} holds for all $E \in (\mathbb{R} \setminus \mathcal Z) \setminus B_\omega$.

By a result of Last--Simon~\cite[Theorem 3.10]{LS}, for a.e.\ $E$ w.r.t.\ the absolutely continuous part of the spectral measure of $H_\omega$, we have for at least one choice of the $\pm$ sign,
\begin{equation}\label{ml10}
\limsup_{N\to\infty} \frac 1{N \log^2 N} \sum_{n=1}^N \lVert A(E, \pm n,\omega) \rVert^2  < \infty
\end{equation}
(the theorem of Last--Simon is stated for half-line operators, but that implies the whole line result using standard arguments).

However, it is easy to see that $\ol f^\pm(E,\omega)>0$ implies that the corresponding $\limsup$ in \eqref{ml10} is $+\infty$. Thus,  $\mathcal P_\omega^{(\ac)}((\mathbb{R} \setminus \mathcal Z) \setminus B_\omega) =0$, and $\lvert B_\omega\rvert =0$ then implies
\[
\mathcal P_\omega^{(\ac)}(\mathbb{R} \setminus \mathcal Z) =0
\]
for $\mu$-a.e.\ $\omega$. Thus, $\sigma_\ac(H_\omega) \subset \ol{\mathcal Z}^\ess$ for $\mu$-a.e.\ $\omega$, which completes the proof.
\end{proof}

Conspicuously absent from our discussion here is a version of Kotani theory (see, e.g., \cite{D07, K84, S83} for some papers on Kotani theory in the finite measure case) or some hints as to why no natural analogue exists. We regard results in this direction for the infinite measure case as very interesting.

\end{document}